\newif\ifblog
\newif\iftex
\newcommand{\nd}{\noindent}
\newcommand{\bed}{\begin{displaymath}}
\newcommand{\eed}{\end{displaymath}}
\newcommand{\bea}{\bed\begin{array}{rl}}
\newcommand{\eea}{\end{array}\eed}
\newcommand{\barray}{\begin{array}{ll}}
\newcommand{\earray}{\end{array}}
\newtheorem{theorem}{Theorem}[section]
\newtheorem{lemma}[theorem]{Lemma}
\newtheorem{remark}[theorem]{Remark}
\newenvironment{proof}{\noindent {\sc Proof:}}{\strut\hfill $\Box$} 
\def\emr#1{\emph{\bf{{\color{red}#1}}}}
\title{Optimal Investment Stopping Problem\\ with Nonsmooth Utility in Finite Horizon}
\author{Chonghu Guan,\thanks{School of Mathematics, Jiaying University, Meizhou 514015, China}\;\;
  Xun Li,\thanks{Department of Applied Mathematics, Hong Kong Polytechnic University, Hong Kong} \;\;
   Zuoquan Xu\thanks{Department of Applied Mathematics, Hong Kong Polytechnic University, Hong Kong}\;  and\;
   Fahuai Yi\thanks{School of Mathematical Science, South China Normal University, Guangzhou 510631, China.
The project is supported by
 NNSF of China (No.11271143, No.11371155 and No.11471276), University Special Research
 Fund for Ph.D. Program of China (20124407110001), and Research Grants Council of Hong Kong under grants 521610 and 519913.}
}
\date{}
\begin{document}

\maketitle
\vspace{-1cm}
\begin{abstract}
In this paper, we investigate an interesting and important stopping problem mixed with stochastic controls and a \textit{nonsmooth} utility over a finite time horizon. The paper aims to develop new methodologies, which are significantly different from those of mixed dynamic optimal control and stopping problems in the existing literature, to figure out a manager's decision. We formulate our model to a free boundary problem of a fully \textit{nonlinear} equation. By means of a dual transformation, however, we can convert the above problem to a new free boundary problem of a \textit{linear} equation. Finally, using the corresponding inverse dual transformation, we apply the theoretical results established for the new free boundary problem to obtain the properties of the optimal strategy and the optimal stopping time to achieve a certain level for the original problem over a finite time investment horizon.

\bigskip
\nd {\bf Keywords}: Parabolic variational inequality; Free boundary; Optimal investment; Optimal stopping; Dual transformation.

\bigskip
\nd {\bf Mathematics Subject Classification.} 35R35; 60G40; 91B70; 93E20.


\end{abstract}


\setlength{\baselineskip}{0.25in}
\section{Introduction}\setcounter{equation}{0}
Optimal stopping problems have important applications in many fields such as science,
engineering, economics and, particularly, finance. The theory in this area
has been well developed for stochastic dynamic systems over the past decades.
In the field of financial
investment, however, an
investor frequently runs into investment decisions where investors
stop investing in risky assets so as to maximize their expected
utilities with respect to their wealth over a finite time investment
horizon. These optimal stopping problems depend on underlying
dynamic systems as well as investors' optimization decisions (controls).
This naturally results in a mixed optimal control and stopping problem, and
Ceci-Bassan (2004) is one of the typical representatives along this line of
research. In the general formulation of such models, the control is mixed,
composed of a control and a stopping time. The theory has also been
studied in Bensoussan-Lions (1984), Elliott-Kopp (1999), Yong-Zhou (1999) and Fleming-Soner (2006),
and applied in finance in
Dayanik-Karatzas (2003), Henderson-Hobson (2008), Li-Zhou (2006), Li-Wu
(2008, 2009), Shiryaev-Xu-Zhou (2008) and Jian-Li-Yi (2014).

In the finance field, finding an optimal stopping time point has been
extensively studied for pricing American-style options, which allow option
holders to exercise the options before or at the maturity. Typical examples
that are applicable include, but are not limited to, those presented in
Chang-Pang-Yong (2009), Dayanik-Karatzas (2003) and R\"uschendorf-Urusov
(2008). In the mathematical finance literature, choosing an optimal stopping
time point is often related to a free boundary problem for a class of
diffusions (see Fleming-Soner (2006) and Peskir-Shiryaev (2006)).
In many applied areas, especially in more extensive investment
problems, however, one often encounters more general controlled diffusion
processes. In real financial markets, the situation is even more complicated
when investors expect to choose as little time as possible to stop portfolio
selection over a given investment horizon so as to maximize their profits
(see Samuelson (1965), Karatzas-Kou (1998), Karatzas-Sudderth (1999),
Karatzas-Wang (2000), Karatzas-Ocone (2002), Ceci-Bassan (2004), Henderson (2007),
Li-Zhou (2006) and Li-Wu (2008, 2009)).

The initial motivation of this paper comes from our recent studies
on choosing an optimal point at which an investor stops investing
and/or sells all his risky assets (see Carpenter (2000) and Henderson-Hobson (2008)). The objective is to find an
optimization process and stopping time so as to meet certain
investment criteria, such as, the maximum of an expected nonsmooth utility
value before or at the maturity. This is a typical yet important problem in the
area of financial investment. However, there are fundamental
difficulties in handling such mixed controls and stopping problems. Firstly, our
investment problem, which is signifcantly different from the classical
American-style options, involves portfolio process in the objective over the entire
time horizon. Secondly, it
involves the portfolio in the drift and volatility terms of the dynamic systems so that the
problem including multi-dimensional financial assets is more realistic
than those addressed in finance literature (see Capenter (2000)). Therefore, it is difficult to solve
these problems either analytically or numerically using current
methods developed in the framework of studying American-style options.
In our model, the corresponding HJB equation of the problem is formulated into a variational inequality
of a fully \textit{nonlinear} equation. We make a dual transformation for the
problem to obtain a new free boundary problem with a \textit{linear} equation. Tackling this new free boundary problem,
we characterize the properties of the free boundary and optimal
strategy for the original problem.

The main innovations of this paper include that: Firstly, we rigorously prove the limit of the value function when $t\rightarrow T$ is the concave hull of the payoff function (but not the payoff function itself), i.e.
\[
\lim\limits_{t\rightarrow T-}V(x,t)=\varphi(x),
\]
where $\varphi(x)$ is the concave hull of the payoff function $g(x)$ (see Lemma \ref{lem:Terminal}).
Secondly, since the obstacle $\varphi(x)$ in variational inequality is not strictly concave (see Figure 2.1),
the equivalence between the dual problem \eqref{v} and the original problem \eqref{V} is not trivial.
However, we successfully proved it in Section 3.
Thirdly, we show a new method to study the free boundary while the exercise region is not connected (see \eqref{h(t)}-\eqref{f(t)} and Lemma \ref{lem:inCR})
so that we can shed light on the monotonicity and differentiability of the free boundaries (see Figure 5.1-5.4.) under any cases of parameters.

The remainder of the paper is organized as follows. In Section 2, the mathematical formulation of the
model is presented, and the corresponding HJB equation with certain boundary-terminal condition is posed.
In particular, we show that the value function $V(x,t)$ is not continuous at $t=T$, i.e.
\[
\lim\limits_{t\rightarrow T-}V(x,t)\neq V(x,T).
\]
In Section 3, we make a dual transformation to convert the free boundary problem of a fully \textit{nonlinear} PDE \eqref{V} to
a new free boundary problem of a \textit{linear} equation \eqref{v}.
Section 4 devotes to the study for the free boundary of problem \eqref{v} in different cases of parameters. In Section 5, using the corresponding inverse dual transformation, we construct the solution of the original
problem \eqref{V} and to present the properties (including the monotonicity and differentiability) are its free boundaries under different cases which is classified in Section 4.
In Section 6 we present conclusions.



\section{Model Formulation}
\setcounter{section}{2} \setcounter{equation}{0}

\subsection{The manager's problem}
The manager operates in a complete, arbitrage-free, continuous-time
financial market consisting of a riskless asset with instantaneous
interest rate $r$ and $n$ risky assets. The risky asset prices $S_i$ are
governed by the stochastic differential equations
\begin{eqnarray}\label{eq:S}
\frac{dS_{i,t}}{S_{i,t}}=(r+\mu_i)dt+\sigma_{i}dW_t^j, \quad \mbox{for } i = 1,2,\cdots,n,
\end{eqnarray}
where the interest rate $r$, the excess appreciation rates $\mu_i$, and the volatility vectors
$\sigma_i$ are constants,
$W$ is a standard $n$-dimensional Brownian motion.
In addition, the covariance matrix $\sigma\sigma'$ is strongly nondegenerate.

A trading strategy for the manager is an $n$-dimensional process $\pi_t$ whose $i$-th component, where $\pi_{i,t}$
is the holding amount of the $i$-th risky asset in the
portfolio at time $t$. An admissible trading strategy $\pi_t$
must be progressively measurable with respect to $\{\mathcal{F}_t\}$ such that $X_t\ge0$. Note that
$X_t=\pi_{0,t}+\sum\limits_{i=1}^n\pi_{i,t}$, where $\pi_{0,t}$ is the amount invested
in the money.
Hence, the wealth $X_t$ evolves according to
\[
dX_t=(rX_t+\mu'\pi_t)dt+\pi_t'\sigma dW_t,
\]
the portfolio $\pi_t$ is a progressively measurable and square integrable process with constraint $X_t\geq0$ for all $t\geq 0$.

The manager's dynamic problem is to choose an admissible trading strategy $\pi_t$ and a stopping time $\tau$ ($t\leq\tau\leq T$) to maximize his expected utility of the exercise wealth before or at the terminal time $T$:
\begin{eqnarray}\label{value}
V(x,t)=\sup\limits_{\pi,\tau}E_{t,x}[e^{-\beta(\tau-t)}g(X_\tau)]:=\sup\limits_{\pi,\tau}E[e^{-\beta(\tau-t)}g(X_\tau)|X_t=x],
\end{eqnarray}
where
\[
g(x)=\frac{1}{\gamma}[(x-b)^++K]^\gamma,
\]
and $\beta$ is the discounted factor.

If $X_t=0$, in order to keep $X_s\geq0$, the only choice of $\pi_s$ is 0 and thus $X_s\equiv0,\;t\leq s\leq T$. Thus
\[
V(0,t)=\sup\limits_{\pi,\tau}E[e^{-\beta(\tau-t)}g(0)]=g(0)=\frac{1}{\gamma}K^\gamma.
\]
Which means the optimal stopping time $\tau$ is the present moment $t$.

\subsection{Discontinuity at the terminal time $T$}

From the definition \eqref{value} we can see that $V(x,T)=g(x)=\frac{1}{\gamma}[(x-b)^++K]^\gamma$. Since the portfolio $\pi_t$ is unrestricted,  $V(x,t)$ may be discontinuous at the terminal time $T$.
Therefore, we should pay attention to $V(x,T-):=\lim\limits_{t\rightarrow T-}V(x,t)$.

\begin{lemma}\label{lem:Terminal}
\sl
The value function $V$ defined in \eqref{value} is not continuous at the terminal time $T$ and satisfies
\[
\lim\limits_{t\rightarrow T-}V(x,t)=\varphi(x),
\]
where
\[
\varphi(x)=
\left\{\begin{array}{l}
kx+\frac{1}{\gamma}K^\gamma,\quad 0<x<\widehat{x}, \\ [2mm]
\frac{1}{\gamma}(x-b+K)^\gamma, \quad x\geq \widehat{x},
\end{array}\right.
\]
is the concave hull of $\frac{1}{\gamma}[(x-b)^++K]^\gamma$ (see Fig 2.1),
here $k$ and $\widehat{x}$ satisfy
\begin{eqnarray}\label{k x}
\left\{\begin{array}{l}
k \widehat{x}+\frac{1}{\gamma}K^\gamma=\frac{1}{\gamma}(\widehat{x}-b+K)^\gamma, \\ [2mm]
k=(\widehat{x}-b+K)^{\gamma-1}.
\end{array}\right.
\end{eqnarray}

\vspace{1cm}

\begin{center}
\begin{picture}(250,100)
    \put(50,10){\vector(1,0){150}}
    \put(60,0){\vector(0,1){95}}
    \put(20,100){Utility}
    \put(80,80){$\varphi(x)$}
    \put(95,-5){$b$}\put(100,10){\circle*{2}}
    \put(120,-5){$\widehat{x}$}\put(125,10){\circle*{2}}
    \put(205,-2){$x$}
    \qbezier(60,30)(60,30)(130,75)\qbezier(100,30)(114,89)(200,95)
    \put(60,30){\line(1,0){40}}
    \put(120,35){$\frac{1}{\gamma}[(x-b)^++K]^\gamma$}
    \put(90,70){\vector(1,-1){10}}
    \put(120,40){\vector(-1,0){10}}
    \put(80,-30){Fig 2.1 \;$\varphi(x)$.}
    \put(30,25){$\frac{1}{\gamma}K^\gamma$}
  \end{picture}
\end{center}
\vspace{1cm}
\end{lemma}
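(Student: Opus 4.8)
The plan is to establish the two-sided bound $\limsup_{t\to T-}V(x,t)\le\varphi(x)$ and $\liminf_{t\to T-}V(x,t)\ge\varphi(x)$ separately, since $\varphi$ is precisely the concave envelope of $g$. For the lower bound I would exhibit explicit near-optimal strategies. Fix $x>0$ and $t<T$ close to $T$. If $x\ge\widehat x$, simply choosing $\pi_s\equiv 0$ and $\tau=T$ gives $E_{t,x}[e^{-\beta(T-t)}g(X_T)]=e^{-\beta(T-t)}g(x)\to g(x)=\varphi(x)$, which already handles that regime. For $0<x<\widehat x$, the idea is to use a nearly risk-free "binary" strategy over the short interval $[t,T]$: with a suitable portfolio one can steer $X_T$ to take value roughly $\widehat x$ with probability $p$ and a value near $0$ with probability $1-p$, where $p$ is chosen so that $pg(\widehat x)+(1-p)g(0)$ is close to $\varphi(x)=kx+\tfrac1\gamma K^\gamma$; the linear segment of $\varphi$ is exactly the chord through $(0,g(0))$ and $(\widehat x,g(\widehat x))$, so by the martingale property of the discounted wealth (or a direct computation with a two-point target) the expectation $E_{t,x}[g(X_T)]$ can be pushed arbitrarily close to $\varphi(x)$ as $t\to T-$, using that the available volatility is nondegenerate so large transfers of probability mass are achievable in short time. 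Letting $t\to T-$ and the approximation parameter go to zero yields $\liminf_{t\to T-}V(x,t)\ge\varphi(x)$.

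For the upper bound, the key observation is that $\varphi$ is concave and $\varphi\ge g$ pointwise, and that the discounted wealth process has the property that $X$ itself is a submartingale-type process under the wealth dynamics (more precisely $e^{-rs}X_s$ is a martingale when $\mu=0$, and in general $X_s$ has controlled growth). I would argue that for any admissible $(\pi,\tau)$ with $t\le\tau\le T$,
\[
E_{t,x}[e^{-\beta(\tau-t)}g(X_\tau)]\le E_{t,x}[e^{-\beta(\tau-t)}\varphi(X_\tau)]\le \varphi(x)+o(1)\quad\text{as }t\to T-,
\]
where the first inequality is $g\le\varphi$ and the second uses concavity of $\varphi$ together with Jensen's inequality applied to the (near-)martingale $X$, plus the discount factor $e^{-\beta(\tau-t)}\le 1$ (assuming $\beta\ge 0$) or an easy Gronwall-type correction. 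The point is that over the shrinking interval $[t,T]$, $E_{t,x}[\varphi(X_\tau)]$ cannot exceed $\varphi(x)$ by more than a quantity vanishing with $T-t$, because $\varphi$ has at most linear growth and the drift contribution to $X$ over $[t,T]$ is $O(T-t)$. Taking the supremum over $(\pi,\tau)$ and then $\limsup_{t\to T-}$ gives the matching upper bound.

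Finally, the discontinuity claim $\lim_{t\to T-}V(x,t)\ne V(x,T)$ follows immediately: $V(x,T)=g(x)$ by definition, while the limit equals $\varphi(x)$, and on $(0,\widehat x)$ one has $\varphi(x)=kx+\tfrac1\gamma K^\gamma>g(x)$ strictly (the chord lies strictly above the graph of $g$ there, since $g$ fails to be concave near the kink at $b$), with equality only at the endpoints and on $[\widehat x,\infty)$. The parameters $k$ and $\widehat x$ are well-defined by the system \eqref{k x}, which is just the tangency condition: the line through $(0,\tfrac1\gamma K^\gamma)$ is tangent to the curve $\tfrac1\gamma(x-b+K)^\gamma$ at $x=\widehat x$, and a one-variable monotonicity/convexity argument shows this system has a unique solution with $\widehat x>b$.

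I expect the main obstacle to be making the lower-bound construction rigorous: one must carefully build a sequence of admissible portfolios on $[t,T]$ satisfying the state constraint $X_s\ge 0$ throughout, whose terminal distributions converge weakly to the two-point distribution $p\,\delta_{\widehat x}+(1-p)\,\delta_{0}$ (or close enough), while controlling integrability so that $E_{t,x}[g(X_T)]\to pg(\widehat x)+(1-p)g(0)$; the nondegeneracy of $\sigma\sigma'$ and the fact that $g$ grows only polynomially are what make this possible, but the details of the construction and the uniform integrability estimate are the technical heart of the argument.
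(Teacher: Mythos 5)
Your lower bound is essentially the paper's argument: the paper also takes $\tau=T$ and a ``bang--bang'' portfolio $\pi^N$ of size $N\to\infty$ that drives the (deflated) wealth to the two-point distribution $\frac{x}{\widehat{x}}\delta_{\widehat{x}}+(1-\frac{x}{\widehat{x}})\delta_{0}$, so that $Eg(X_T^N)\to \frac{x}{\widehat{x}}g(\widehat{x})+(1-\frac{x}{\widehat{x}})g(0)=\varphi(x)$ along the chord; the technical points you flag (state constraint, convergence of the hitting probabilities) are handled there by an explicit optional-stopping computation for the bounded martingale $X^N$.

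The genuine gap is in your upper bound. You apply Jensen's inequality to $X$ itself, asserting that $X$ is a ``near-martingale'' because ``the drift contribution to $X$ over $[t,T]$ is $O(T-t)$.'' That premise is false in this model: the drift is $(rX_s+\mu'\pi_s)\,ds$ and $\pi$ is unconstrained, so $E_{t,x}[X_\tau]$ is \emph{not} bounded by $x+o(1)$ uniformly over admissible strategies --- indeed your own lower-bound construction uses portfolios of size $N\to\infty$ precisely to move the law of $X_T$ a macroscopic distance in an arbitrarily short time, which is the same mechanism that makes $V$ discontinuous at $T$ in the first place. Consequently $E_{t,x}[\varphi(X_\tau)]\le\varphi(E_{t,x}[X_\tau])$ does not yield $\varphi(x)+o(1)$ by your route. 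The paper closes this hole with the state-price density $\zeta_s=e^{-(r+\mu'(\sigma'\sigma)^{-1}\mu)s-\mu'\sigma^{-1}W_s}$: the process $\zeta_s X_s$ is an exact martingale for \emph{every} admissible $\pi$, so Jensen gives $E_{t,x}\varphi\bigl(\tfrac{\zeta_\tau}{\zeta_t}X_\tau\bigr)\le\varphi(x)$ uniformly, and one then transfers back to $E_{t,x}\varphi(X_\tau)$ via the $\gamma$-H\"older bound $|\varphi(x)-\varphi(y)|\le C|x-y|^\gamma$ together with a H\"older-inequality estimate showing $\sup_{\tau,\pi}E_{t,x}\bigl|\varphi(X_\tau)-\varphi\bigl(\tfrac{\zeta_\tau}{\zeta_t}X_\tau\bigr)\bigr|\to 0$ as $t\to T-$, since $\sup_{t\le s\le T}|\zeta_t/\zeta_s-1|\to 0$ in the relevant $L^p$. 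Some such change of numeraire (or an equivalent martingale-measure argument) is indispensable here; without it the upper bound does not go through.
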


\begin{proof}
We first prove
\[
\limsup\limits_{t\rightarrow T-}V(x,t)\leq\varphi(x).
\]

Define
\[
\zeta_t=e^{-(r+\mu'(\sigma'\sigma)^{-1}\mu)t-\mu'\sigma^{-1} W_t},
\]
then
\[
d\zeta_t=\zeta_t[-rdt-\mu'\sigma^{-1} dW_t]
\]
and
\begin{eqnarray}
d(\zeta_tX_t)
& = & \zeta_tdX_t+X_td\zeta_t+d\zeta_tdX_t\nonumber\\
& = & \zeta_t[(rX_t+\mu'\pi_t)dt+\pi_t'\sigma dW_t-rX_tdt-\mu'\sigma^{-1} X_tdW_t
-(\mu'\sigma^{-1})(\pi_t'\sigma)'dt]\nonumber\\\label{dX}
& = & \zeta_t[\pi_t'\sigma -\mu'\sigma^{-1} X_t] dW_t.
\end{eqnarray}
Thus, $\zeta_tX_t$ is a martingale. For any $\pi\in {\cal A}$ and stopping time $\tau$ ($t\leq\tau\leq T$), by Jensen's inequality, we have
\[
E_{t,x}\varphi\Big(\frac{\zeta_\tau}{\zeta_t} X_\tau\Big)\leq \varphi\Big(E_{t,x}\Big(\frac{\zeta_\tau}{\zeta_t} X_\tau\Big)\Big)= \varphi(x).
\]
Then
\begin{eqnarray}\label{Terminal}
\limsup\limits_{t\rightarrow T-}\sup\limits_{\tau,\;\pi}E_{t,x}\varphi\Big(\frac{\zeta_\tau}{\zeta_t} X_\tau\Big)\leq \varphi(x).
\end{eqnarray}
We now come to prove
\begin{eqnarray}\label{Terminal1}
\lim\limits_{t\rightarrow T-}\sup\limits_{\tau,\;\pi}E_{t,x}\Big|\varphi(X_\tau)-\varphi\Big(\frac{\zeta_\tau}{\zeta_t} X_\tau\Big)\Big|=0.
\end{eqnarray}

Indeed, owing to $\varphi(x)$ is differentiable and for all $x,\;y\geq\widehat{x}$,
\[
|(x-b+K)^\gamma-(y-b+K)^\gamma|\leq |x-y|^\gamma,
\]
there exits constant $C>0$ such that for all $x,\;y>0$,
\[
|\varphi(x)-\varphi(y)|\leq C|x-y|^\gamma.
\]
Thus, for any $\pi$ and stopping time $t\leq\tau\leq T$,
\begin{eqnarray*}
E_{t,x} \Big|\varphi (X_\tau )-\varphi \Big(\frac{\zeta_\tau}{\zeta_t} X_\tau \Big)\Big|\leq C E_{t,x}\Big(\Big(\frac{\zeta_\tau}{\zeta_t} X_\tau\Big)^\gamma\Big|\frac{\zeta_t}{\zeta_\tau} -1\Big|^\gamma\Big).
\end{eqnarray*}
Using H\"older inequality, we obtain
\begin{eqnarray*}
E_{t,x} \Big|\varphi (X_\tau )-\varphi \Big(\frac{\zeta_\tau}{\zeta_t} X_\tau \Big)\Big|
&\leq&
C \Big(E_{t,x} \Big(\frac{\zeta_\tau}{\zeta_t} X_\tau\Big) \Big)^\gamma\Big(E_{t,x}\Big|\frac{\zeta_t}{\zeta_\tau}-1\Big|^{\frac{\gamma}{1-\gamma}}\Big)^{1-\gamma}\\
&\leq&
C x^\gamma\Big(E_{t,x}\sup\limits_{t\leq s\leq T}\Big|\frac{\zeta_t}{\zeta_s}-1\Big|^{\frac{\gamma}{1-\gamma}}\Big)^{1-\gamma}.
\end{eqnarray*}
Hence,
\begin{eqnarray*}
\lim\limits_{t\rightarrow T-}\sup\limits_{\tau,\;\pi}E_{t,x}\Big|\varphi(X_\tau)-\varphi\Big(\frac{\zeta_\tau}{\zeta_t} X_\tau\Big)\Big|\leq C x^\gamma\lim\limits_{t\rightarrow T-}\Big(E_{t,x}\sup\limits_{t\leq s\leq T}\Big|\frac{\zeta_t}{\zeta_s}-1\Big|^{\frac{\gamma}{1-\gamma}}\Big)^{1-\gamma}=0.
\end{eqnarray*}
Therefore, by \eqref{Terminal} and \eqref{Terminal1},
\begin{eqnarray*}
\limsup\limits_{t\rightarrow T-}V(x,t)
&=&
\limsup\limits_{t\rightarrow T-}\sup\limits_{\tau,\;\pi}E_{t,x}\Big(e^{-\beta(\tau-t)}g(X_\tau)\Big)\\
&\leq&
\limsup\limits_{t\rightarrow T-}\sup\limits_{\tau,\;\pi}E_{t,x}\varphi(X_\tau)\\
&\leq&
\limsup\limits_{t\rightarrow T-}\sup\limits_{\tau,\;\pi}E_{t,x}\varphi\Big(\frac{\zeta_\tau}{\zeta_t} X_\tau\Big) +\lim\limits_{t\rightarrow T-}\sup\limits_{\tau,\;\pi}E_{t,x}\Big|\varphi(X_\tau)-\varphi\Big(\frac{\zeta_\tau}{\zeta_t} X_\tau\Big)\Big|\\
&\leq&
\varphi(x).
\end{eqnarray*}

Next, we further prove
\begin{eqnarray}\label{Terminal2}
\liminf\limits_{t\rightarrow T-}V(x,t)\geq\varphi(x).
\end{eqnarray}

For fix $t<T$, if $x\geq x_0$ or $x=0$, we can get
\[
V(x,t)\geq g(x)=\varphi(x),
\]
which implies that \eqref{Terminal2} holds true.

If $0<x<x_0$, choose $\tau=T$ and choose $\pi_s$ such that
\[
\frac{\zeta_s}{\zeta_t}[\pi_t'\sigma -\mu'\sigma^{-1} X_t]=(\pi^N_s)':=N\chi_{\big\{0<\frac{\zeta_s}{\zeta_t}X_s<x_0\big\}}I_n', \quad \forall\;N>0,
\]
where $I_n$ is an $n$-dimensional unit column vector. Let $X_s^N=\frac{\zeta_s}{\zeta_t}X_s$. Then using \eqref{dX} results in
\[
dX_s^N=(\pi^N_s)'dW_s,\quad t\leq s\leq T.
\]
It is not hard to obtain
\[
0\leq X_s^N\leq x_0,\quad t\leq s\leq T,
\]
and since
\begin{eqnarray*}
\{0<X_T^N<x_0\}
&=&        \{0<X_s^N=x+NI_n'(W_s-W_t)<x_0,\;t\leq s\leq T\}\\
&\subset&  \{0<x+NI_n'(W_T-W_t)<x_0\},
\end{eqnarray*}
we have
\[
P(0<X_T^N<x_0)\leq P(0<x+NI_n'(W_T-W_t)<x_0)\rightarrow 0,\quad N\rightarrow\infty.
\]
Note that
\[
x_0P(X_T^N=x_0)\leq EX_T^N \leq x_0P(X_T^N=x_0)+x_0P(0<X_T^N<x_0).
\]
Therefore,
\[
\lim\limits_{N\rightarrow\infty}P(X_T^N=x_0)=\frac{EX_T^N}{x_0}=\frac{x}{x_0}, \quad \lim\limits_{N\rightarrow\infty}P(X_T^N=0)=1-\frac{x}{x_0}.
\]
As a result,
\[
\lim\limits_{N\rightarrow\infty}Eg(X_T^N)=\frac{x}{x_0}g(x_0)+\Big(1-\frac{x}{x_0}\Big)g(0) =\frac{x}{x_0}\Big(kx_0+\frac{1}{\gamma}K^\gamma\Big)+\Big(1-\frac{x}{x_0}\Big)\frac{1}{\gamma}K^\gamma =kx+\frac{1}{\gamma}K^\gamma=\varphi(x).
\]
Thus
\[
\sup\limits_{\tau,\;\pi}E_{t,x}\Big(e^{-\beta(\tau-t)}g\Big(\frac{\zeta_\tau}{\zeta_t} X_\tau\Big)\Big)\geq e^{-\beta(T-t)}\lim\limits_{N\rightarrow\infty}Eg(X_T^N)=e^{-\beta(T-t)}\varphi(x).
\]

Meanwhile, similar to \eqref{Terminal1}, we have
\begin{eqnarray*}
\lim\limits_{t\rightarrow T-}\sup\limits_{\tau,\;\pi}E_{t,x}\Big|g(X_\tau)-g\Big(\frac{\zeta_\tau}{\zeta_t} X_\tau\Big)\Big|=0.
\end{eqnarray*}
Therefore,
\begin{eqnarray*}
\liminf\limits_{t\rightarrow T-}V(x,t)
&=&
\liminf\limits_{t\rightarrow T-}\sup\limits_{\tau,\;\pi}E_{t,x}\Big(e^{-\beta(\tau-t)}g(X_\tau)\Big)\\
&\geq&
\liminf\limits_{t\rightarrow T-}\sup\limits_{\tau,\;\pi}E_{t,x}\Big(e^{-\beta(\tau-t)}g\Big(\frac{\zeta_\tau}{\zeta_t} X_\tau\Big)\Big) -\lim\limits_{t\rightarrow T-}\sup\limits_{\tau,\;\pi}E_{t,x}\Big|g(X_\tau)-g\Big(\frac{\zeta_\tau}{\zeta_t} X_\tau\Big)\Big|\\
&\geq&
\varphi(x).
\end{eqnarray*}
\end{proof}
\par
Since the value function is not continuous at the ternimal time $T$, 
we introduce its corresponding HJB equation with the terminal condition $V(x,T-)=\varphi(x),\quad x>0$ in the next subsection.

\subsection{The HJB equation}\label{sec:unconstrained}

%

Applying the dynamic programming principle, we get the following  HJB equation
%
%
\begin{eqnarray}\label{V0}
\left\{\begin{array}{l}
\min\Big\{-V_t-\max\limits_\pi[\frac{1}{2}(\pi'\sigma\sigma'\pi)V_{xx}+\mu^\prime\pi V_x]-rxV_x+\beta V, V-\frac{1}{\gamma}[(x-b)^++K]^\gamma\Big\}=0,\\
\hfill \quad x>0, \; 0<t<T, \\ [5mm]
V(0,t)=\frac{1}{\gamma}K^\gamma, \quad 0<t< T, \\ [2mm]
V(x,T-)=\varphi(x),\quad x>0,
\end{array}
\right.
\end{eqnarray}

\emr{Assume} $V_x\geq0$. Note that the Hamiltonian operator
\begin{eqnarray}\label{Hamiltonian}
\max_\pi\Big\{\frac{1}{2}(\pi'\sigma\sigma'\pi)V_{xx}+\mu'\pi V_x\Big\}-rxV_x+rV
\end{eqnarray}
is singularity if $V_{xx}>0$ or $V_{xx}=0,\;V_x>0$.
Thus, $V_{xx}\leq0$. Moreover, if $V_x=0$ holds on $(x_0,t_0)$,
then for any $x\geq x_0$, $V_x(x,t_0)=0$,
which contradicts to $V(x,t)\geq\frac{1}{\gamma}[(x-b)^++K]^\gamma\rightarrow +\infty,\; x\rightarrow+\infty$.
Therefore, if $V(x,t)\in C^{2,1}$ is \emr{an increasing (in $x$)} solutionof \eqref{V0}, it must satisfy
\begin{eqnarray}\label{priori2}
V_{x}>0, V_{xx}<0, \quad x>0, \; 0<t<T.
\end{eqnarray}

Note that the gradient of $\pi'\sigma\sigma'\pi$ with respect to $\pi$ is
\[
\bigtriangledown_\pi(\pi'\sigma\sigma'\pi)=2\sigma\sigma'\pi.
\]
Hence,
\[
\pi^*=-(\sigma\sigma')^{-1}\mu\frac{V_x(x,t)}{V_{xx}(x,t)}.
\]

Applying $V_{xx}<0$, we have
\[
V-\frac{1}{\gamma}[(x-b)^++K]^\gamma\geqslant 0\quad\text{ if and only if }\quad V-\varphi(x)\geqslant 0.
\]
Define $a^2=\mu'(\sigma\sigma')^{-1}\mu$, then the variational inequality  \eqref{V0} is reduce to
\[
\min\Big\{-V_t+\frac{a^2}{2}\frac{V^2_x}{V_{xx}}-rxV_x+\beta V,\; V-\varphi(x)\Big\}=0,
\]

Hence, we formulate our problem into the following variational inequality problem
\begin{eqnarray}\label{V}
\left\{\begin{array}{l}
\min\Big\{-V_t+\displaystyle\frac{a^2}{2}\frac{V^2_x}{V_{xx}}-rxV_x+\beta V, V-\varphi(x)\Big\}=0, \quad x>0, \; 0<t<T, \\ [5mm]
V(0,t)=\frac{1}{\gamma}K^\gamma, \quad 0<t< T, \\ [2mm]
V(x,T-)=\varphi(x),\quad x>0.
\end{array}
\right.
\end{eqnarray}
\emr{We want to show this equation has a (unique) solution $V(x,t)\in C^{2,1}$ which satisfies \eqref{priori2}.
And a verification theorem will ensure this solution is just $\widetilde{V}$ defined in \eqref{tildeV}.
}


\section{Dual Problem}\label{sec:Dual Problem}\setcounter{equation}{0}
Firstly, assume that
\begin{eqnarray}\label{priori1}
V,\;V_x\in C(Q_x),
\end{eqnarray}
where $Q_x=[0,+\infty)\times(0,T)$ and
\begin{eqnarray}                                  \label{priori3}
\lim\limits_{x\rightarrow+\infty}V(x,t)=+\infty,\quad \lim\limits_{x\rightarrow+\infty}V_x(x,t)=0,\quad \forall
t\in(0,T).
\end{eqnarray}
Later, we will prove the above results in Theorem \ref{thm:V}.

Now define a dual transformation of $V(x,t)$, for any $t\in(0,T)$ (see Pham \cite{Pham}),
\begin{eqnarray}                                  \label{vV}
v(y,t)=\max\limits_{x\geq0}(V(x,t)-xy),\quad y>0,
\end{eqnarray}
then the optimal $x$ to fix $y>0$ satisfies
\begin{eqnarray*}
\left\{\begin{array}{ll}
\partial_x(V(x,t)-xy)=V_x(x,t)-y=0, & \mbox{if } y\leq V_x(0,t), \\ [2mm]
x=0, & \mbox{if } y> V_x(0,t).
\end{array}\right.
\end{eqnarray*}
Define a transformation
\begin{eqnarray}\label{x=I}
x=I(y,t):=\left\{\begin{array}{ll}
(V_x(\cdot,t))^{-1}(y), & \mbox{if } y\leq V_x(0,t), \\ [2mm]
0, & \mbox{if } y> V_x(0,t).
\end{array}\right.
\end{eqnarray}
Owing to \eqref{priori2} and \eqref{priori3}, $I(y,t)$ is continuously decreasing in $y$ and
$\lim\limits_{y\rightarrow0+}I(y,t)=+\infty$. Thus
\begin{eqnarray}                                                             \label{vV}
v(y,t)=V(I(y,t),t)-I(y,t)y,
\end{eqnarray}

Define the dual transformation of $\varphi(x)$ as
\[
\psi(y)=\max\limits_{x\geq0}(\varphi(x)-xy),\quad y>0,
\]
then the optimal $x$ to fix $y$, which we denote by $x_\varphi(y)$, is
\[
x_\varphi(y)=
\left\{\begin{array}{ll}
y^{\frac{1}{\gamma-1}}-(K-b), & \mbox{for } 0<y<k, \\ [2mm]
\in [0,\widehat{x}], & \mbox{for } y=k,\\ [2mm]
0, & \mbox{for } y>k,
\end{array}\right.
\]
and
\begin{eqnarray}
\psi(y) & \!\!\!=\!\!\! & \varphi(x_\varphi(y))-x_\varphi(y)y \nonumber \\ [2mm]
\label{psi} & \!\!\!=\!\!\! & \left\{\begin{array}{ll}
\frac{1-\gamma}{\gamma}y^{\frac{\gamma}{\gamma-1}}+(K-b)y, & \mbox{for } 0<y<k, \\ [2mm]
\frac{1}{\gamma}K^{\gamma}, & \mbox{for } y\geq k,
\end{array}\right.
\end{eqnarray}
(see Fig 3.1).

\bigskip

\begin{center}
\begin{picture}(250,100)
    \put(50,10){\vector(1,0){150}}
    \put(60,0){\vector(0,1){95}}
    \put(80,80){$\psi(y)$}
    \put(115,-5){$k$}\put(120,10){\circle*{2}}
    \put(205,-2){$y$}
    \qbezier(70,95)(85,40)(120,30)
    \put(60,30){\line(1,0){10}}\put(80,30){\line(1,0){10}}\put(100,30){\line(1,0){10}}\put(120,30){\line(1,0){50}}
    \put(30,25){$\frac{1}{\gamma}K^\gamma$}
    \put(80,-30){Fig 3.1 \;$\psi(y)$.}
    \end{picture}
\end{center}
\bigskip
\bigskip
\bigskip

It follows from \eqref{psi} and \eqref{k x} that we get
\begin{eqnarray*}
\psi'(y)=
\left\{\begin{array}{ll}
-y^{\frac{1}{\gamma-1}}+(K-b)<-\widehat{x}<0, & \mbox{for } 0<y<k, \\ [2mm]
0, & \mbox{for } y> k.
\end{array}\right.
\end{eqnarray*}
and
\begin{eqnarray*}
\psi''(y)=
\left\{\begin{array}{ll}
\frac{1}{1-\gamma}y^{\frac{1}{\gamma-1}-1}>0, & \mbox{for } 0<y<k, \\ [2mm]
0, & \mbox{for } y> k.
\end{array}\right.
\end{eqnarray*}
It is obvious that
\[
\varphi(x)=\min\limits_{y>0}(\psi(y)+xy).
\]

It follows from \eqref{vV} that we have
\begin{eqnarray}
\label{vy=-I}
&&v_y(y,t)=V_x(I(y,t),t)I_y(y,t)-y I_y(y,t)-I(y,t)=-I(y,t), \\ [2mm]
\label{Vxx}
&&v_{yy}(y,t)=-I_y(y,t)=\frac{-1}{V_{xx}(I(y,t),t)}\chi_{\{y\leq V_x(0,t)\}}, \\ [2mm]
&&v_t(y,t)=V_t(I(y,t),t)+V_x(I(y,t),t)I_t(y,t)-y I_t(y,t)=V_t(I(y,t),t). \nonumber
\end{eqnarray}
Thus, for any $y>0$, set $x=I(y,t)$. If $y\leq V_x(0,t)$, then
\begin{eqnarray}                                                     \label{Lv=LV}
-v_t-\frac{a^2}{2}y^2v_{yy}-(\beta-r)yv_y+\beta v=-V_t+\frac{a^2}{2}\frac{V^2_x}{V_{xx}}-rxV_x+\beta V\geq0;
\end{eqnarray}
If $y> V_x(0,t)$, then $x=0$, $v(y,t)=V(0,t)=\frac{1}{\gamma}K^\gamma$, $v_t(y,t)=V_t(0,t)=0$,
$v_y(y,t)=v_{yy}(y,t)=0$, which implies
\begin{eqnarray}\label{Lv>0}
v(y,t)=\psi(y),\;-v_t-\frac{a^2}{2}y^2v_{yy}-(\beta-r)yv_y+\beta v=\frac{\beta}{\gamma}K^\gamma>0 .
\end{eqnarray}
In the above case of $y> V_x(0,t)$, since $V_x(0,t)\geq k$ (which we will prove in Theorem \ref{thm:case}),
we have $y>k$ as well as $\psi(y)=\frac{1}{\gamma}K^\gamma$. Combining \eqref{Lv=LV} and \eqref{Lv>0} yields
\begin{eqnarray}                                                      \label{condition1}
-v_t-\frac{a^2}{2}y^2v_{yy}-(\beta-r)yv_y+\beta v\geq0,\quad y>0,\;0<t<T.
\end{eqnarray}

By the definition of $v$ and $\psi$, we have
\begin{eqnarray}
&&V(x,t)\geq\varphi(x),\;\forall\;x\geq0\nonumber\\
&\Rightarrow&\max\limits_{x\geq0}(V(x,t)-xy)\geq\max\limits_{x\geq0}(\varphi(x)-xy), \;\forall\;y>0\nonumber\\\label{condition2}
&\Rightarrow& v(y,t)\geq \psi(y), \;\forall\;y>0.
\end{eqnarray}
On the other hand,
\begin{eqnarray*}
\begin{array}{ll}
& v(y,t)>\psi(y) \\ [2mm]
\Rightarrow & V(I(y,t),t)-I(y,t)y>\max\limits_{x\geq0}(\varphi(x)-xy) \\ [2mm]
\Rightarrow & V(I(y,t),t)-I(y,t)y>\varphi(I(y,t))-I(y,t)y \\ [2mm]
\Rightarrow & V(I(y,t),t)>\varphi(I(y,t)), \\ [2mm]
\end{array}
\end{eqnarray*}
and by the variational inequality in \eqref{V},
\begin{eqnarray*}
\begin{array}{ll}
& V(I(y,t),t)>\varphi(I(y,t)) \\  [2mm]
\Rightarrow & \Big(-V_t+\displaystyle\frac{a^2}{2}\frac{V^2_x}{V_{xx}}-rxV_x+\beta V\Big)(I(y,t),t)=0.
\end{array}
\end{eqnarray*}
together with \eqref{Lv=LV} yields
\begin{eqnarray}\label{condition3}
v(y,t)>\psi(y)\Rightarrow & \Big(-v_t-\displaystyle\frac{a^2}{2}y^2v_{yy}-(\beta-r)yv_y+\beta v\Big)(y,t)=0.
\end{eqnarray}

Combining the above equation with \eqref{condition1}, \eqref{condition2} and \eqref{condition3}, we obtain
\begin{eqnarray}\label{v}
\left\{
\begin{array}{l}
\min\{-v_t-\displaystyle\frac{a^2}{2}y^2v_{yy}-(\beta-r)yv_y+\beta v, v-\psi\}=0,\quad y>0,\;0<t<T, \\ [5mm]
v(y,T-)=\psi(y),\quad y>0.
\end{array}
\right.
\end{eqnarray}

\begin{remark}\sl
The equation in \eqref{v} is degenerate on the boundary $y=0$. According to Fichera's theorem (see Ole\u{i}nik
and Radkevi\'{c} \cite{OO}), we must not put the boundary condition on $y=0$.
\end{remark}

\section{The solution and the free boundary of \eqref{v}}\label{sec:proof}
\setcounter{equation}{0}
Now we find the solution of \eqref{v} which is under liner growth condition.
\begin{theorem}
\sl                                              \label{thm:existence of u}
Problem \eqref{v} has unique solution $v(y,t)\in W^{2,1}_{p,\;loc}(Q_y\setminus B_\varepsilon(k,T))\cap C(\overline{Q_y})$ for any $p>2$ and small $\varepsilon>0$.
Moreover
\begin{eqnarray}                                                \label{v1}
&&\psi(y)\leq v \leq A(e^{B(T-t)}y^{\frac{\gamma}{\gamma-1}}+1),\\ \label{v3}
&&v_t\leq0,\\                                                   \label{v4}
&&v_y \leq0,\\                                                     \label{v5}
&&v_{yy}\geq0.
\end{eqnarray}
where $Q_y=(0,+\infty)\times(0,T)$, $B_{\varepsilon}(k,T)$ is the disk with center (k,T) and radius $\varepsilon$, $A=\max\{\frac{1-\gamma}{\gamma},\frac{1}{\gamma}K^\gamma,|K-b|k\}$, $B=\frac{a^2}{2}\frac{\gamma}{(\gamma-1)^2} +\frac{\beta-r\gamma}{\gamma-1}$.
\end{theorem}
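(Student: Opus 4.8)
The plan is to solve \eqref{v} as a limit of penalized, truncated problems on a bounded domain, then recover the estimates \eqref{v1}--\eqref{v5} by comparison and differentiation, and finally establish uniqueness separately. First I would introduce a penalization: for $\delta>0$ let $\beta_\delta(\cdot)$ be the usual smooth convex penalty function (with $\beta_\delta\leq 0$, $\beta_\delta'\geq 0$, $\beta_\delta(s)\to -\infty$ for $s<0$ and $\to 0$ for $s\geq 0$), and consider the semilinear equation $-v_t-\frac{a^2}{2}y^2v_{yy}-(\beta-r)yv_y+\beta v+\beta_\delta(v-\psi)=0$. Because the operator degenerates at $y=0$, I would work on a truncated domain $(1/n,n)\times(0,T)$, imposing the terminal condition $v=\psi$ at $t=T$, the Dirichlet data $v=\psi(1/n)$ at $y=1/n$ (a value that is consistent with the expected subsolution), and $v=A(e^{B(T-t)}y^{\gamma/(\gamma-1)}+1)$ at $y=n$. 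Standard parabolic theory gives a classical solution $v^{n,\delta}$; the key preliminary step is to verify that the explicit functions $\underline v=\psi(y)$ and $\overline v=A(e^{B(T-t)}y^{\gamma/(\gamma-1)}+1)$ are respectively a sub- and supersolution of the penalized problem — this is precisely where the constants $A$ and $B$ in the statement are forced. For the supersolution one computes $-\overline v_t-\frac{a^2}{2}y^2\overline v_{yy}-(\beta-r)y\overline v_y+\beta\overline v$ on the $y^{\gamma/(\gamma-1)}$ term: the exponent $\gamma/(\gamma-1)$ and the coefficient $B=\frac{a^2}{2}\frac{\gamma}{(\gamma-1)^2}+\frac{\beta-r\gamma}{\gamma-1}$ are chosen exactly so that this combination is $\geq 0$, while the ``$+1$'' handles the zeroth-order term $\beta\overline v\geq 0$ and the choice of $A$ dominates $\psi$ initially; for the subsolution one uses that $\psi$ satisfies $-\frac{a^2}{2}y^2\psi''-(\beta-r)y\psi'+\beta\psi=\frac{\beta}{\gamma}K^\gamma>0$ on $\{y>k\}$ and a similar sign computation on $\{y<k\}$, plus $\beta_\delta(\psi-\psi)=0$.

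Next I would derive the monotonicity and convexity bounds \eqref{v3}--\eqref{v5} at the level of the approximate solutions, uniformly in $n,\delta$, by differentiating the penalized equation. Differentiating in $t$: $w=v^{n,\delta}_t$ satisfies a linear parabolic equation with zeroth-order coefficient $\beta+\beta_\delta'(v-\psi)\geq 0$; since $\psi_t=0$, the terminal datum gives $w(\cdot,T)=-\beta_\delta(v(\cdot,T-)-\psi)/\!\!\ldots$, and one checks the boundary data are $\leq 0$ (the supersolution bound is decreasing in $t$, the others are $t$-independent), so the maximum principle yields $v^{n,\delta}_t\leq 0$. Differentiating in $y$: because $\psi'\leq 0$ and $\psi''\geq 0$, analogous arguments — differentiating once, noting the structure of the $y$-derivatives of $y^2\partial_{yy}$ and $y\partial_y$ — give $v^{n,\delta}_y\leq 0$ and, differentiating twice, $v^{n,\delta}_{yy}\geq 0$; the convexity of $\beta_\delta$ enters crucially in the second-derivative estimate (the penalty term contributes $\beta_\delta''(v-\psi)(v_y-\psi')^2+\beta_\delta'(v-\psi)(v_{yy}-\psi'')$, whose sign one must control using $\beta_\delta''\geq 0$ and $\psi''\geq 0$). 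With these one-sided bounds and the sub/supersolution sandwich in hand, interior $W^{2,1}_{p,loc}$ estimates (away from the single point $(k,T)$ where $\psi$ loses $C^1$-regularity, which forces the excision of $B_\varepsilon(k,T)$) let me pass to the limit $\delta\to 0$ then $n\to\infty$, obtaining a solution $v\in W^{2,1}_{p,loc}(Q_y\setminus B_\varepsilon(k,T))\cap C(\overline{Q_y})$ inheriting \eqref{v1}--\eqref{v5}; continuity up to $t=T-$ follows from the sandwich $\psi\leq v\leq\overline v$ together with $\overline v(y,T)=A(y^{\gamma/(\gamma-1)}+1)$ — wait, that does not pinch to $\psi$, so continuity at $T$ instead needs a separate barrier argument near $t=T$ (a standard construction adding a term like $C(T-t)$ to a multiple of $\psi$), which I would include as a short lemma.

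For uniqueness, given two solutions $v_1,v_2$ in the stated class, the standard variational-inequality comparison argument applies: on the region where $v_1>v_2$ one has $v_1>\psi$, hence $v_1$ solves the PDE with equality there, while $v_2\geq\psi$ makes $v_2$ a supersolution; the linear operator $-\partial_t-\frac{a^2}{2}y^2\partial_{yy}-(\beta-r)y\partial_y+\beta$ has nonnegative zeroth-order term, so the maximum principle on $v_1-v_2$ (using the growth bound \eqref{v1} to rule out escape to $y=\infty$, and Fichera's observation that no boundary condition at $y=0$ is needed because the characteristic there is outgoing) forces $v_1\leq v_2$; by symmetry $v_1=v_2$. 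The main obstacle throughout is the lack of a boundary condition at the degenerate boundary $y=0$ combined with the unbounded domain: every maximum-principle step must be justified with an explicit barrier/growth argument at both ends, and the truncation at $y=1/n$ must be chosen so that the artificial Dirichlet data do not destroy the one-sided derivative bounds in the limit — this, and controlling the penalty term's second derivative to get \eqref{v5}, are the delicate points; the singularity of $\psi$ at $y=k$ is comparatively mild and is handled simply by excising $B_\varepsilon(k,T)$ in the regularity claim.
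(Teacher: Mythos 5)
Your overall scaffolding (penalize, truncate, verify that $\psi$ and $A(e^{B(T-t)}y^{\frac{\gamma}{\gamma-1}}+1)$ are sub/supersolutions, pass to the limit, then prove uniqueness by the standard variational-inequality comparison) is the same skeleton the paper relies on, and your supersolution computation identifying $B=\frac{a^2}{2}\frac{\gamma}{(\gamma-1)^2}+\frac{\beta-r\gamma}{\gamma-1}$ matches \eqref{v1} exactly. However, your derivation of \eqref{v3} has a genuine gap. You propose to differentiate the penalized equation in $t$ and argue that the terminal datum for $w=v_t$ is $\leq 0$; but evaluating the penalized equation at $t=T$ with $v(\cdot,T)=\psi$ gives $v_t(y,T)=\Psi(y)+\beta_\delta(0)$ for $y<k$ (with $\Psi$ as in \eqref{Lpsi}), and $\Psi(y)\to+\infty$ as $y\to0+$ whenever $\frac{\beta-r\gamma}{\gamma}-\frac{a^2}{2}\frac{1}{1-\gamma}>0$, so no admissible choice of $\beta_\delta(0)$ makes this terminal datum nonpositive uniformly. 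Your own writeup leaves this computation as an ellipsis, which is precisely where the argument breaks. The paper sidesteps this entirely: it proves $v_t\leq0$ at the level of the limit variational inequality by the time-shift trick ($\widetilde v(y,t)=v(y,t-\delta)$ solves the same obstacle problem with terminal value $v(\cdot,T-\delta)\geq\psi$, so comparison gives $\widetilde v\geq v$), which uses only $v\geq\psi$ and never touches the penalty term.

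A related, smaller issue affects your route to \eqref{v4} and \eqref{v5}: $\psi'$ jumps at $y=k$ (from $-\widehat x$ to $0$), so differentiating the penalty term $\beta_\delta(v-\psi)$ once or twice in $y$ produces distributional contributions at $y=k$ that you would have to smooth and control; you flag the convexity of $\beta_\delta$ as the delicate point but do not resolve the nonsmoothness of the obstacle. The paper again avoids the penalized level: it first shows $(k,t)\notin{\cal \varepsilon R}_y$ (because $v_y$ is continuous while $\psi'$ is not), then reads off $v_y=\psi'\leq0$ and $v_{yy}=\psi''\geq0$ on the exercise region and on $\partial({\cal CR}_y)$, and applies the maximum principle to the once- and twice-differentiated \emph{unpenalized} equation inside ${\cal CR}_y$ only. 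Your uniqueness argument and your observation that continuity at $t=T$ needs a separate barrier are fine, but to make the monotonicity and convexity estimates rigorous you should either adopt the paper's post-limit arguments or substantially strengthen the penalized-level analysis (obstacle smoothing plus $n$-dependent penalty normalization on the truncated domains).
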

\begin{proof}
According to the results of existence and uniqueness of $W^{2,1}_{p}$ solutions\cite{Lady},
the solution of system \eqref{v} can be proved by a standard penalty method, furthermore, by Sobolev embedding theorem,
\begin{eqnarray}\label{continue}
v\in C((Q_y\cup\{t=0,T\})),\quad v_y\in C((Q_y\cup\{t=0,T\})\setminus(k,T))
\end{eqnarray}
(see Friedman \cite{Fr}).
Here, we omit the details. The first inequality in \eqref{v1} follows from \eqref{v} directly, we now prove the second inequality in \eqref{v1}.

Denote
\begin{eqnarray*}
w(y,t)=A(e^{B(T-t)}y^{\frac{\gamma}{\gamma-1}}+1).
\end{eqnarray*}
Then
\begin{eqnarray*}
&&-w_t-\frac{a^2}{2}y^2w_{yy}-(\beta-r)yw_y+\beta w \\
&=&Ae^{B(T-t)}y^{\frac{\gamma}{\gamma-1}}\Big(B-\frac{a^2}{2}\Big(\frac{\gamma}{\gamma-1}\Big)\Big(\frac{\gamma}{\gamma-1}-1\Big) -(\beta-r)\Big(\frac{\gamma}{\gamma-1}\Big)+\beta\Big)+\beta A\\
&=&Ae^{B(T-t)}y^{\frac{\gamma}{\gamma-1}}\Big(B-\frac{a^2}{2}\frac{\gamma}{(\gamma-1)^2} -\frac{\beta-r\gamma}{\gamma-1}\Big)+\beta A\\
&\geq&0,
\end{eqnarray*}
and $w(y,t)\geq w(y,T)\geq\psi(y)$.
Using the comparison principle of variational inequality (see Friedman \cite{Fr}), we know that $w$ is a super solution of \eqref{v}.

Next we prove \eqref{v3}. Let $\widetilde{v}(y,t)=v(y,t-\delta)$ for small $\delta>0$, then $\widetilde{v}$ satisfies
\begin{eqnarray*}
\left\{\begin{array}{l}
\min\{-\widetilde{v}_t-\frac{a^2}{2}y^2\widetilde{v}_{yy}-(\beta-r)y\widetilde{v}_y+\beta\widetilde{v},
\widetilde{v}-\psi(y)\}=0,\quad y>0,\;\delta<t<T, \\ [2mm]
\widetilde{v}(y,T)\geq\psi(y),\quad y>0.
\end{array}\right.
\end{eqnarray*}
Hence, by the comparison principle, we have $\widetilde{v}\geq v$, i.e. $v_t\leq0$.

Define
\begin{eqnarray*}
{\cal \varepsilon R}_y &=& \{(y,t)\in Q_y|v=\psi\},\quad\hbox{exercise region}, \\
\quad{\cal CR}_y &=& \{(y,t)\in Q_y|v>\psi\},\quad\hbox{continuation region}.
\end{eqnarray*}

Note that $k$ is the only discontinuity point of $\psi'(y)$ and $\psi''(y)$.
Now, we claim $(k,t)$ could not be contained in ${\cal \varepsilon R}_y$ for all $t\in(0,T)$.
Otherwise, if $(k,t_0)\in{\cal \varepsilon R}_y$ for some $t_0<T$, then it belongs
to the minimum points of $v-\psi(y)$, thus $v_y(k-,t_0)\leq\psi^{\prime}(k-)<\psi^{\prime}(k+)\leq v_y(k+,t_0)$, which implies $v_y$ does not continue
at the point $(k,t_0)$ so as to yields a contradiction to \eqref{continue}.

Here, we present the proof of \eqref{v4} and \eqref{v5}. Recalling that
\begin{eqnarray*}
&&\psi'(y)<0,\quad\psi''(y)>0,\quad y\in(0,k), \\ [2mm]
&&\psi'(y)=0,\quad\psi''(y)=0,\quad y\in(k,+\infty).
\end{eqnarray*}
Thanks to $v$ gets the minimum value in ${\cal \varepsilon R}_y$,
$v_y=\psi'\leq0$ in ${\cal \varepsilon R}_y$.
Moreover, $v_y(y,T)=\psi'\leq0$.
Taking the derivative for the following equation
\[
-v_t-\frac{a^2}{2}y^2v_{yy}-(\beta-r)yv_y+\beta v=0 \quad \mbox{in } \quad {\cal CR}_y
\]
with respect to $y$ leads to
\begin{eqnarray}\label{vy}
-\partial_tv_y-\frac{a^2}{2}y^2\partial_{yy}v_y-(a^2+\beta-r)y\partial_yv_y+rv_y=0 \quad \mbox{in } \quad {\cal CR}_y.
\end{eqnarray}
Note that $v_y=\psi'\leq 0$ on $\partial({\cal CR}_y)$, where $\partial({\cal CR}_y)$ is the boundary
of ${\cal CR}_y$ in the interior of $Q_y$, using the maximum principle we obtain \eqref{v4}.

In addition, $v\geq\psi$, together with $v=\psi,\;v_y=\psi'$ in ${\cal \varepsilon R}_y$ yields $v_{yy}= \psi''\geq0$ in ${\cal \varepsilon R}_y$.
It is not hard to prove that
\[
\lim\limits_{{\cal CR}_y\ni y\rightarrow \partial({\cal CR}_y)}v_{yy}(y,t)\geq0.
\]
and $v_{yy}(y,T)=\psi''\geq0$.
Taking the derivative for equation \eqref{vy} with respect to $y$, we obtain
\[
-\partial_tv_{yy}-\frac{a^2}{2}y^2\partial_{yy}v_{yy}-2a^2y\partial_yv_{yy}+(r-a^2)v_{yy}=0 \quad \mbox{in } \quad
{\cal CR}_y.
\]
Using the maximum principle, we obtain
\begin{eqnarray}\label{vyy2}
v_{yy}\geq0 \quad\hbox{in}\quad {\cal CR}_y.
\end{eqnarray}
\end{proof}

Define free boundaries
\begin{eqnarray}\label{h(t)}
&&h(t)=\inf\{y\in[0,k]|v(y,t)=\psi(y)\},\quad 0<t<T,\\\label{g(t)}
&&g(t)=\sup\{y\in[0,k]|v(y,t)=\psi(y)\},\quad 0<t<T,\\\label{f(t)}
&&f(t)=\inf\{y\in[k,+\infty)|v(y,t)=\psi(y)\},\quad 0<t<T,
\end{eqnarray}
Owing to $\partial_t(v(y,t)-\psi(y))=v_t\leq0$, functions $h(t)$ and $f(t)$ are decreasing in $t$ and $g(t)$ is increasing in $t$.

Substituting the first expression of \eqref{psi} into the equation in \eqref{v} yields
\begin{eqnarray}
&&-\partial_t\psi-\frac{a^2}{2}y^2\partial_{yy}\psi-(\beta-r)y\partial_y\psi+\beta \psi \nonumber\\
&=&\frac{a^2}{2}\Big(\frac{\gamma}{\gamma-1}-1\Big)y^\frac{\gamma}{\gamma-1}
-(\beta-r)y\Big[-y^{\frac{1}{\gamma-1}}+(K-b)\Big]+\beta\Big[\frac{1-\gamma}{\gamma}y^\frac{\gamma}{\gamma-1}+(K-b)y\Big] \nonumber\\                    \label{Lpsi}
&=&\Big(\frac{\beta-r\gamma}{\gamma}-\frac{a^2}{2}\frac{1}{1-\gamma}\Big)y^\frac{\gamma}{\gamma-1}+r(K-b)y,\quad y<k,
\end{eqnarray}
and note that
\[
-\partial_t\psi-\frac{a^2}{2}y^2\partial_{yy}\psi-(\beta-r)y\partial_y\psi+\beta \psi=\frac{\beta}{\gamma}K^\gamma>0,\quad y>k.
\]
Denote the right hand side of \eqref{Lpsi} by $\Psi(y)$. It is not hard to see that
\begin{eqnarray}                                                            \label{CRysubset}
{\cal \varepsilon R}_y\subset [\{\Psi(y)\geq0,\;y<k\}\cup (k,+\infty)]\times(0,T).
\end{eqnarray}

\begin{lemma}\label{lem:inCR}
\sl
The set ${\cal \varepsilon R}_y$ is expressed as
\begin{eqnarray}                                                                        \label{CRy}
{\cal \varepsilon R}_y=\{(y,t)\in Q_y|h(t)\leq y\leq g(t)\}\cup\{(y,t)\in Q_y|y\geq f(t)\}.
\end{eqnarray}
\end{lemma}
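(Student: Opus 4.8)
The plan is to prove the two inclusions in \eqref{CRy} separately; the inclusion ``$\subseteq$'' is immediate and ``$\supseteq$'' is the substantive one. For ``$\subseteq$'': any $(y,t)\in{\cal \varepsilon R}_y$ has $y\neq k$ (shown in the proof of \thmref{thm:existence of u}), so if $y\in(0,k)$ then $h(t)\leq y\leq g(t)$ directly from \eqref{h(t)}--\eqref{g(t)}, and if $y>k$ then $y\geq f(t)$ from \eqref{f(t)}. For ``$\supseteq$'' the regions $y>k$ and $y<k$ must be handled differently, because $\psi'\equiv0$ on $(k,+\infty)$ whereas $\psi'<0$ on $(0,k)$. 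The region $y>k$ is easy: by \eqref{v4} and $\psi'=0$, $y\mapsto v(y,t)-\psi(y)$ is nonincreasing on $(k,+\infty)$; since $v\in C(\overline{Q_y})$ the set $\{y\geq k:\,v(y,t)=\psi(y)\}$ is closed, so whenever it is nonempty it contains its infimum, i.e.\ $v(f(t),t)=\psi(f(t))$ with $f(t)>k$ (because $v(k,t)>\psi(k)$), and monotonicity then gives $v(y,t)=\psi(y)$ for every $y\geq f(t)$; if that set is empty then $f(t)=+\infty$ and there is nothing to prove.

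The substance is the region $y<k$, where I would show that for each $t$ the set $G(t):=\{y\in(0,k):\,v(y,t)=\psi(y)\}$ is an interval; combined with $g(t)<k$ (from $(k,t)\notin{\cal \varepsilon R}_y$ and continuity of $v$) this gives $G(t)=[h(t),g(t)]\cap(0,k)$, which is ``$\supseteq$'' there. Suppose not: there exist $0<y_1<y_0<y_2<k$ with $y_1,y_2\in G(t_*)$ but $u(y_0,t_*)>0$, where $u:=v-\psi\geq0$ by \eqref{v1}. Set $p=\sup\{y\in[y_1,y_0]:u(y,t_*)=0\}$ and $q=\inf\{y\in[y_0,y_2]:u(y,t_*)=0\}$, so that $0<p<y_0<q<k$, $u(p,t_*)=u(q,t_*)=0$, and $u(\cdot,t_*)>0$ on $(p,q)$. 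Let $\LL w:=-w_t-\frac{a^2}{2}y^2w_{yy}-(\beta-r)yw_y+\beta w$ be the linear operator appearing in \eqref{v}, and consider the nonempty open set $W:=\{(y,t):\,p<y<q,\ t_*<t<T,\ u(y,t)>0\}$ (it contains the intersection of a neighbourhood of $(y_0,t_*)$ with $\{t>t_*\}$, by continuity of $u$). On $W$ we have $v>\psi$, so $\LL v=0$; since $\LL\psi=\Psi(y)$ on $(0,k)$ by \eqref{Lpsi}, linearity gives $\LL u=-\Psi(y)$ on $W$. Now $p,q\in{\cal \varepsilon R}_y$ force $\Psi(p),\Psi(q)\geq0$ by \eqref{CRysubset}, and $\{y>0:\Psi(y)\geq0\}$ is an interval, because $\Psi(y)/y=\big(\tfrac{\beta-r\gamma}{\gamma}-\tfrac{a^2}{2(1-\gamma)}\big)y^{\frac{1}{\gamma-1}}+r(K-b)$ and $y\mapsto y^{\frac{1}{\gamma-1}}$ is strictly monotone ($\tfrac{1}{\gamma-1}<0$); hence $\Psi\geq0$ on $[p,q]$ and so $\LL u\leq0$ on $W$. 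Finally $u\geq0$ throughout, $\beta\geq0$, and $u=0$ on the parabolic boundary $\partial_pW$ of $W$ for $\LL$ (which after the change of time $\tau=T-t$ is forward parabolic): every point of $\partial_pW$ lies on $\{t=T\}$ (where $u\to0$ by \eqref{v}), or on $\{y=p\}$ (resp.\ $\{y=q\}$) at some $t\in(t_*,T)$ (where $0\leq u(p,t)\leq u(p,t_*)=0$ since $u_t=v_t\leq0$ by \eqref{v3}), or in the strip $(p,q)\times(t_*,T)$ off $W$ (where $u\leq0$, hence $u=0$) --- the only face \emph{not} belonging to $\partial_pW$ is $\{t=t_*\}$. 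By the parabolic maximum principle ($W$ is bounded and stays away from $y=0$, so $\LL$ is uniformly parabolic there), $u\leq0$ on $W$, contradicting $W\neq\emptyset$. Thus $G(t)$ is an interval, and ``$\supseteq$'' holds in all cases, completing the proof.

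The decisive step, and where a naive attempt breaks down, is the choice of $W$. One cannot run the maximum principle on the rectangle $(p,q)\times(0,t_*)$: going \emph{backwards} in time the obstacle gap only widens, $u$ need not vanish on the lateral sides $\{y=p\},\{y=q\}$, and no contradiction appears. Running the comparison \emph{forwards} toward $T$ is what works: $u$ is driven to $0$ at $t=T$ by \eqref{v}, the lateral sides then do carry $u=0$ (from $v_t\leq0$ and $u\geq0$), and the one face that would otherwise obstruct the argument, $\{t=t_*\}$, is precisely the face a forward parabolic problem omits from its parabolic boundary. A secondary point requiring care is that $\Psi\geq0$ must be known not only at the free-boundary values $p,q$ but on all of $[p,q]$; this follows from \eqref{CRysubset} together with the strict monotonicity of $\Psi(y)/y$, which makes $\{\Psi\geq0\}$ an interval.
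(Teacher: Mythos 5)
Your proof is correct and rests on the same mechanism as the paper's own argument: a comparison/maximum-principle step applied to the region where $v>\psi$ lying between points of ${\cal \varepsilon R}_y$, exploiting that $\Psi\geq0$ there (via \eqref{CRysubset} and the connectedness of $\{\Psi\geq0\}$, which you justify through the monotonicity of $\Psi(y)/y$) and that the parabolic boundary taken toward $t=T$ lies in $\{v=\psi\}$. You differ in two genuine respects. First, for the component $\{y\geq f(t)\}$ the paper simply repeats the comparison argument (``similar proof''), whereas you observe that $\partial_y(v-\psi)=v_y\leq0$ on $(k,+\infty)$ by \eqref{v4}, so $v-\psi$ is nonincreasing and vanishes identically past its first zero; this is more elementary and avoids a second appeal to $\Psi$. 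Second, the paper merely asserts the existence of an open ${\cal N}\subset\Omega$ with $\partial_p{\cal N}\subset{\cal \varepsilon R}_y$ and invokes the comparison principle, leaving the geometry implicit; you instead work on the explicit strip $(p,q)\times(t_*,T)$ with vertical lateral sides, using $v_t\leq0$ from \eqref{v3} to force $u=v-\psi$ to vanish there, the terminal condition to force $u=0$ at $t=T$, and you correctly identify $\{t=t_*\}$ as the one face omitted from the parabolic boundary of the backward operator. This supplies exactly the detail the paper's one-line construction of ${\cal N}$ suppresses, at the cost of a slightly longer setup. The only cosmetic caveat is that your auxiliary set $G(t)$ collides with the free boundary $G(t)$ introduced in Section 5 of the paper; rename it to avoid confusion.
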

\begin{proof}
By the definitions of $h(t),\;g(t)$ and $f(t)$, we get
\[
{\cal \varepsilon R}_y\subset\{(y,t)\in Q_y|h(t)\leq y\leq g(t)\}\cup\{(y,t)\in Q_y|y\geq f(t)\}.
\]
Now, we prove
\begin{eqnarray}\label{subset}
\Omega:=\{(y,t)\in Q_y|h(t)\leq y\leq g(t)\}\subset{\cal \varepsilon R}_y.
\end{eqnarray}
Since $\{(h(t),t),\;(g(t),t)\}\cap Q_y\subset {\cal \varepsilon R}_y\cap\{y<k\}\subset \{\Psi\geq0\}$ and $\{\Psi\geq0\}$ is a connected region, we have
\[
\Omega\subset \{\Psi\geq0\}.
\]
Assume that \eqref{subset} is false.
Since ${\cal CR}_y$ is an open set,
there exists an open subset  $\cal N$ such that ${\cal N}\subset \Omega$ and $\partial_p{\cal N}\subset {\cal \varepsilon R}_y$,
where $\partial_p{\cal N}$ is the parabolic boundary of ${\cal N}$.
Thus,
\begin{eqnarray}\label{vpsi}
\left\{\begin{array}{l}
-v_t-\displaystyle\frac{a^2}{2}y^2v_{yy}-(\beta-r)yv_y+\beta v=0 \quad \hbox{in}\;{\cal N}, \\ [2mm]
-\psi_t-\displaystyle\frac{a^2}{2}y^2\psi_{yy}-(\beta-r)y\psi_y+\beta \psi\geq0\quad \hbox{in}\;{\cal N}, \\ [2mm]
v=\psi\quad \hbox{on}\quad\partial_p{\cal N}.
\end{array}\right.
\end{eqnarray}
By the comparison principle, $v\leq\psi$ in ${\cal N}$, which implies ${\cal N}=\emptyset$.

Similar proof yields
\begin{eqnarray*}
\{(y,t)\in Q_y|y\geq f(t)\}\subset{\cal \varepsilon R}_y.
\end{eqnarray*}
Therefore, the desired result \eqref{CRy} holds.
\end{proof}

Thanks to Lemma \ref{lem:inCR}, $h(t)$, $g(t)$ and $f(t)$ are three free boundaries of \eqref{v}.

\begin{theorem}\label{thm:classify}
\sl
The free boundaries
$h(t),\;g(t)$ and $f(t)\in C^\infty(0,T)$ and have the following classification

\noindent {\bf Case I: $\beta\geq\frac{a^2}{2}\frac{\gamma}{1-\gamma}+r\gamma$, $\Psi (k)\geq 0$.}
\[
h(t)\equiv0\leq g(t)\leq g(T-)=k=f(T-)\leq f(t),
\]
see Fig 4.1.

\noindent {\bf Case II: $\beta\geq\frac{a^2}{2}\frac{\gamma}{1-\gamma}+r\gamma$, $\Psi (k)< 0$.}

If $\beta>\frac{a^2}{2}\frac{\gamma}{1-\gamma}+r\gamma$,
\[
h(t)\equiv0\leq g(t)\leq g(T-)=
\Big(\frac{-r(K-b)}{\frac{\beta-r\gamma}{\gamma}-\frac{a^2}{2}\frac{1}{1-\gamma}}\Big)^{\gamma-1}<k=f(T-)\leq f(t),
\]
see Fig 4.2.

If $\beta=\frac{a^2}{2}\frac{\gamma}{1-\gamma}+r\gamma$,
\[
{\cal \varepsilon R}_y\cap\Big((0,k)\times(0,T)\Big)=\emptyset,\quad k=f(T-)\leq f(t),
\]
see Fig 4.4.

\noindent {\bf Case III: $\beta<\frac{a^2}{2}\frac{\gamma}{1-\gamma}+r\gamma$, $\Psi (k)> 0$.}
\[
\Big(\frac{-r(K-b)}{\frac{\beta-r\gamma}{\gamma}-\frac{a^2}{2}\frac{1}{1-\gamma}}\Big)^{\gamma-1}
=h(T-)\leq h(t)\leq g(t)\leq g(T-)=k=f(T-)\leq f(t),
\]
see Fig 4.3.

\noindent {\bf Case IV: $\beta<\frac{a^2}{2}\frac{\gamma}{1-\gamma}+r\gamma$, $\Psi (k)\leq 0$.}
\[
{\cal \varepsilon R}_y\cap(0,k)\times(0,T)=\emptyset,\quad k=f(T-)\leq f(t),
\]
see Fig 4.4.
%
\end{theorem}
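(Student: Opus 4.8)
The proof has two independent parts: the $C^{\infty}$-regularity of the free boundaries $h,g,f$, and their explicit localisation, which also yields the four-case dichotomy. I would treat the regularity as essentially standard. From \thmref{thm:existence of u} we have $v_t\le 0$, so $h$ and $f$ are non-increasing and $g$ non-decreasing in $t$; a monotone function confined to a horizontal strip has bounded variation, hence is continuous off a countable set, and the possible jumps are removed by a routine interior barrier, using that away from $y=0$ the operator in \eqref{v} is uniformly parabolic, that $\psi\in C^{\infty}$ away from $y=k$, and that the three boundaries never reach $y=0$ (they live in $[0,k]$, resp.\ $[k,+\infty)$) nor $y=k$ (since $(k,t)\notin{\cal \varepsilon R}_y$). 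Once each boundary is a continuous graph, near an interior point $(y_{\ast},t_{\ast})$ with $t_{\ast}<T$ the non-degeneracy $L\psi(y_{\ast})\neq 0$ holds: $L\psi=\tfrac{\beta}{\gamma}K^{\gamma}>0$ along $f$, while along $g$ and along $h$ (in Case III) we have $L\psi=\Psi>0$ because, for $t<T$, these curves sit strictly inside $\{\,0<y<k:\Psi(y)>0\,\}$, a strict-inclusion fact I prove below by Hopf's lemma. Hence the classical bootstrap for one-phase parabolic obstacle problems (Schauder estimates on the continuation side plus the implicit function theorem, as in Friedman) upgrades each boundary to $C^{\infty}(0,T)$; where the theorem asserts $h\equiv 0$ there is nothing to prove.

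The heart of the localisation is an elementary sign study of
\[
\Psi(y)=L\psi(y)=\Big(\tfrac{\beta-r\gamma}{\gamma}-\tfrac{a^{2}}{2(1-\gamma)}\Big)y^{\gamma/(\gamma-1)}+r(K-b)\,y=y\Big(c\,y^{1/(\gamma-1)}+r(K-b)\Big),\qquad 0<y<k,
\]
where $c:=\tfrac{\beta-r\gamma}{\gamma}-\tfrac{a^{2}}{2(1-\gamma)}$, so that $c\ge 0$ is exactly the hypothesis $\beta\ge\tfrac{a^{2}}{2}\tfrac{\gamma}{1-\gamma}+r\gamma$ of Cases I--II and $c<0$ that of Cases III--IV. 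Since $y\mapsto y^{1/(\gamma-1)}$ is a decreasing bijection of $(0,+\infty)$, $\Psi$ has at most one positive zero, at $y_0:=\big(-r(K-b)/c\big)^{\gamma-1}$, which exists precisely when $c$ and $r(K-b)$ have opposite signs; moreover $\Psi$ is convex with $\Psi(0+)=+\infty$ when $c>0$, and concave with $\Psi(0+)=-\infty$ when $c<0$. Reading this off: for $c\ge 0$, either $\Psi>0$ on all of $(0,k]$ (this is $\Psi(k)\ge 0$, Case I) or $y_0<k$ and $\{\Psi\ge 0\}\cap(0,k)=(0,y_0]$ (this is $\Psi(k)<0$, Case II); for $c<0$, either $y_0<k$ and $\{\Psi\ge 0\}\cap(0,k)=[y_0,k)$ (this is $\Psi(k)>0$, Case III) or $\Psi\le 0$ throughout $(0,k)$ (this is $\Psi(k)\le 0$, Case IV). Via \eqref{CRysubset} this gives the one-sided bounds $g(t)\le y_0$ in Case II, $h(t)\ge y_0$ in Case III, and, in Case IV, that ${\cal \varepsilon R}_y\cap\big((0,k)\times(0,T)\big)$ can at most touch the single line $y=y_0$ where $\Psi$ vanishes; but it cannot, since there $v-\psi$ would be a nonnegative supersolution ($L(v-\psi)=-\Psi>0$) on both sides of $y=y_0$, vanishing to first order (as $v_y$ is continuous at $y_0<k$), which Hopf's lemma applied from each side forbids --- the same argument shows $g(t)$ (all cases) and $h(t)$ (Case III) stay strictly inside $\{\Psi>0\}$ for $t<T$.

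It remains to identify the terminal limits, which fix the monotone pictures. Always $f(t)\ge k$, with $f(t)>k$ for $t<T$ since $(k,t)\notin{\cal \varepsilon R}_y$. The equalities $f(T-)=k$; $g(T-)=k$ (Cases I, III); $g(T-)=y_0$ (Case II); $h(T-)=y_0$ (Case III) all follow from the \emph{reverse} inclusion near $t=T$: for every $\bar y$ in the open set $\{\,0<y<k:\Psi(y)>0\,\}\cup(k,+\infty)$ --- precisely where $L\psi>0$ --- a whole neighbourhood of $(\bar y,T)$ lies in ${\cal \varepsilon R}_y$. The mechanism: $\psi$ is a strict supersolution of the equation in \eqref{v} near $\bar y$, so using the continuity of $v$ up to $t=T$ with $v(\cdot,T-)=\psi$ and the global bound \eqref{v1}, one compares $v$ on a small box $(\bar y-\rho,\bar y+\rho)\times(T-\eta,T)$ with a perturbed supersolution $\psi(y)+\epsilon\,\theta(y)+C(\epsilon)(T-t)$ that dominates $v$ on the parabolic boundary and collapses to $\psi$ as $\epsilon\to 0$; the comparison principle then forces $v=\psi$ on that box, i.e.\ $\bar y$ belongs to the exercise region for $t$ near $T$. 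Letting $\bar y\uparrow k$, $\bar y\downarrow k$, $\bar y\uparrow y_0$, etc., and combining with the one-sided bounds above pins the four terminal values; with the monotonicities ($h,f$ decreasing, $g$ increasing) and the ordering $h(t)\le g(t)<k<f(t)$ this produces Figs 4.1--4.4.

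The main obstacle is this terminal barrier argument. The equation in \eqref{v} runs backward in $t$, the terminal datum $\psi$ is merely continuous, and --- most delicately --- the claim $h\equiv 0$ in Cases I--II requires pushing the same comparison all the way to $y\to 0$, where the operator degenerates and both $v$ and $\psi$ blow up like $y^{\gamma/(\gamma-1)}$; choosing $\theta$ and $C(\epsilon)$ so that the boundary domination survives \emph{simultaneously} the approaches to $\{t=T\}$ and to $\{y=0\}$ --- e.g.\ by matching the $y^{\gamma/(\gamma-1)}$-coefficients exactly through \eqref{v1} --- is the point that needs genuine care; the rest is bookkeeping with the comparison principle and the monotonicity already in hand.
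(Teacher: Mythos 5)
Your containment step (sign analysis of $\Psi$ combined with \eqref{CRysubset}) and your deferral of the $C^\infty$ regularity to Friedman match the paper. But there is a genuine gap in the part you rely on the terminal barrier for: it cannot deliver the claim $h(t)\equiv 0$ in Cases I and II. Your barrier argument, even if perfected, only shows that a point $\bar y$ with $\Psi(\bar y)>0$ lies in ${\cal \varepsilon R}_y$ for $t$ in some interval $(T-\eta(\bar y),T)$. Since $v_t\le 0$ makes the exercise region grow in $t$ (equivalently, $h$ is decreasing), this yields at best $h(T-)=0$, and hence only the vacuous bound $h(t)\ge 0$ for earlier $t$; it says nothing about whether small $y$ are exercised at times far from $T$. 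The paper closes this with a different, global-in-time comparison: it sets ${\cal N}=\{0<y\le h(t),\,0<t<T\}$, notes that $Lv=0$ there, $L\psi=\Psi\ge 0$ there (because $(0,h(t)]\subset\{\Psi\ge0\}$ in Cases I--II), and $v=\psi$ on the parabolic boundary --- with no datum needed at the degenerate boundary $y=0$ by the Fichera condition of Remark 3.1 --- so the comparison principle forces $v\le\psi$ below the graph of $h$, contradicting the definition of $h$ unless ${\cal N}=\emptyset$. This is exactly the mechanism of Lemma \ref{lem:inCR}, not of your terminal barrier, and your proposal never invokes it for $h$.

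On the terminal limits themselves, your route is genuinely different from the paper's and, as written, incomplete. The paper does not build barriers: assuming e.g.\ $f(T-)>k$, it evaluates the equation $Lv=0$ at $t=T$ on the strip $k<y<f(T-)$ (where $v(\cdot,T)=\psi$ and $L\psi=\frac{\beta}{\gamma}K^\gamma>0$) to get $v_t(y,T)>0$, contradicting $v_t\le 0$; the same one-line argument gives $g(T-)=y_T$ and $g(T-)=k$. Your supersolution $\psi(y)+\epsilon\theta(y)+C(\epsilon)(T-t)$ does not ``force $v=\psi$ on that box'': for fixed $\epsilon$ it strictly exceeds $\psi$ at interior times, and letting $\epsilon\to 0$ shrinks the time-height of the box, so you only recover $v(\bar y,T-)=\psi(\bar y)$, which is already known. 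The fix is a time-independent barrier $\psi+\epsilon\theta$ with $\theta(\bar y)=0$, $\theta>0$ on the lateral sides, valid on a time interval depending on the fixed $\epsilon$ --- you flag this as the delicate point but do not resolve it, whereas the paper's $v_t\le 0$ argument avoids it entirely. Finally, your Hopf-lemma excursion in Case IV is unnecessary: there $\Psi<0$ on all of the open interval $(0,k)$ (the zero $y_T$, when it exists, satisfies $y_T\ge k$), so ${\cal \varepsilon R}_y\cap\big((0,k)\times(0,T)\big)=\emptyset$ follows from \eqref{CRysubset} alone.
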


\begin{center}
  \begin{picture}(0,0)
    \put(-180,-90){\vector(1,0){150}}
    \put(-170,-100){\vector(0,1){95}}
    \put(-190,-20){$T$}
    \put(-170,-20){\line(1,0){140}}
    \put(-90,-15){$k$}
    \put(-800,-20){\circle*{2}}
    \qbezier(-130,-90)(-110,-35)(-80,-20)
    \put(-160,-70){${\cal \varepsilon R}_y$}
    \put(-90,-70){${\cal CR}_y$}
    \put(-40,-70){${\cal \varepsilon R}_y$}
    \qbezier(-40,-90)(-50,-30)(-80,-20)
    \put(-120,-40){$g(t)$}
    \put(-60,-40){$f(t)$}
    \put(-25,-102){$y$}
    \put(-180,-120){Fig 4.1. $\beta\geq\frac{a^2}{2}\frac{\gamma}{1-\gamma}+r\gamma$, $\Psi (k)\geq 0$. }                                                                \end{picture}
  \begin{picture}(0,0)
    \put(50,-90){\vector(1,0){150}}
    \put(60,-100){\vector(0,1){95}}
    \put(40,-20){$T$}
    \put(60,-20){\line(1,0){140}}
    \put(145,-15){$k$}
    \put(150,-20){\circle*{2}}
    \put(70,-70){${\cal \varepsilon R}_y$}
    \put(130,-70){${\cal CR}_y$}
    \put(185,-70){${\cal \varepsilon R}_y$}
    \qbezier(190,-90)(180,-30)(150,-20)
    \qbezier(90,-90)(100,-30)(130,-20)
    \put(120,-15){$y_T$}
    \put(205,-102){$y$}
    \put(90,-40){$g(t)$}
    \put(170,-40){$f(t)$}
    \put(50,-120){Fig 4.2. \;$\beta>\frac{a^2}{2}\frac{\gamma}{1-\gamma}+r\gamma$, $\Psi (k)< 0$.}
  \end{picture}
\end{center}
\vspace{4cm}

\begin{center}
  \begin{picture}(0,0)
    \put(50,-90){\vector(1,0){150}}
    \put(60,-100){\vector(0,1){95}}
    \put(40,-20){$T$}
    \put(60,-20){\line(1,0){140}}
    \put(140,-15){$k$}
    \put(130,-70){${\cal CR}_y$}
    \put(190,-70){${\cal \varepsilon R}_y$}
    \qbezier(190,-90)(180,-30)(150,-20)
    \put(-160,-15){$y_T$}
    \put(205,-102){$y$}
    \put(-120,-40){$g(t)$}
    \put(-55,-40){$f(t)$}
    \put(-150,-40){$h(t)$}
    \put(-180,-120){Fig 4.3. \;$\beta<\frac{a^2}{2}\frac{\gamma}{1-\gamma}+r\gamma$, $\Psi (k)> 0$.}
    \end{picture}
    \begin{picture}(0,-0)
    \put(-180,-90){\vector(1,0){150}}
    \put(-170,-100){\vector(0,1){95}}
    \put(-190,-20){$T$}
    \put(-170,-20){\line(1,0){140}}
    \put(-95,-15){$k$}
    \put(-80,-20){\circle*{2}}
    \qbezier(-130,-90)(-110,-30)(-80,-20)
    \put(-140,-60){${\cal \varepsilon R}_y$}
    \put(-100,-70){${\cal CR}_y$}
    \put(-45,-60){${\cal \varepsilon R}_y$}
    \put(-170,-70){${\cal CR}_y$}
    \qbezier(-40,-90)(-50,-30)(-80,-20)
    \qbezier(-140,-90)(-150,-30)(-160,-20)
    \put(-25,-102){$y$}
    \put(175,-40){$f(t)$}
    \put(40,-120){Fig 4.4. \;$\beta<\frac{a^2}{2}\frac{\gamma}{1-\gamma}+r\gamma$, $\Psi (k)\leq 0$,}
    \put(40,-140){\quad\quad\quad or $\beta=\frac{a^2}{2}\frac{\gamma}{1-\gamma}+r\gamma$, $\Psi (k)< 0$.}
    .
  \end{picture}
\end{center}
\vspace{5cm}

\begin{proof}
By the method of \cite{Fr}, we could prove $h(t),\;g(t),\;f(t)\in C^\infty(0,T)$, we omit the details.

Here, we only prove the  results in {\bf Case II}, the remaining situations are similar.
If $\beta>\frac{a^2}{2}\frac{\gamma}{1-\gamma}+r\gamma$ and $\Psi(k)<0$, then $K<b$, Denote $y_T=\Big(\frac{-r(K-b)}{\frac{\beta-r\gamma}{\gamma}-\frac{a^2}{2}\frac{1}{1-\gamma}}\Big)^{\gamma-1}$,
then $\Psi(k)<0$ implies $y_T < k$.
By \eqref{CRysubset} and $\{\Psi\geq0\}=(0,y_T]$,
\[
{\cal \varepsilon R}_y\subset\Big((0,y_T]\cup(k,\infty)\Big)\times (0,T).
\]
thus
\[
0\leq h(t)\leq g(t)\leq y_T<k \leq f(t).
\]

Now, we prove $h(t)\equiv0$.
Set ${\cal N}:=\{(y,t)|0<y\leq h(t),\;0<t<T\}$. It follows from \eqref{vpsi} that we have $v\leq\psi$ in ${\cal N}$.
By the definition of $h(t)$, ${\cal N}=\emptyset$ as well as $h(t)\equiv0$.

Here, we aim to prove $f(T-):=\lim\limits_{t\uparrow T}f(t)=k$.
Otherwise, if $f(T-)>k$, then there exists a contradiction that
\begin{eqnarray*}
0
&=&-v_t-\frac{a^2}{2}y^2v_{yy}-(\beta-r)yv_y+\beta v\\
&=&-v_t-\frac{a^2}{2}y^2\psi_{yy}-(\beta-r)y\psi_y+\beta \psi =-\partial_tv+\frac{1}{\gamma}K^\gamma> 0,\quad k<y<f(T-),\;t=T.
\end{eqnarray*}
So $f(T-)=k$.
The proof of $g(T-)=y_T$ is similar that if $g(T-)>y_T$, there exists contradiction
\begin{eqnarray*}
0
&=&-v_t-\frac{a^2}{2}y^2v_{yy}-(\beta-r)yv_y+\beta v\\
&=&-v_t-\frac{a^2}{2}y^2\psi_{yy}-(\beta-r)y\psi_y+\beta \psi =-\partial_tv+\Psi(y)> 0,\quad g(T-)<y<y_T,\;t=T.
\end{eqnarray*}
If $\beta=\frac{a^2}{2}\frac{\gamma}{1-\gamma}+r\gamma$ and $\Psi(k)<0$, then $K<b$ as well as $\Psi(y)<0$ for all $0<y<k$, thus $(0,k]\times(0,T)\subset {\cal CR}_y$, so $h(t),\; g(t)$ do not exist.
\end{proof}


\section{The solution and the free boundary of original problem \eqref{V}}\label{sec:original}
\setcounter{equation}{0}
\begin{lemma}                                                \label{lem:vyy}
\begin{eqnarray}                                             \label{vyy}
v_{yy}>0,\quad0<y<f(t),\quad0<t<T.
\end{eqnarray}
\end{lemma}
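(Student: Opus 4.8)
The plan is to upgrade the weak inequality $v_{yy}\ge 0$ (established in \thmref{thm:existence of u}, and refined to \eqref{vyy2} on ${\cal CR}_y$) to strict positivity on $\{0<y<f(t)\}$ via a strong maximum principle, exploiting the strict convexity of $\psi$ on $(0,k)$. First I would split $\{0<y<f(t)\}$ along the free boundaries using \lemref{lem:inCR}: it is the disjoint union of the exercise part $\{h(t)\le y\le g(t)\}$ (empty in the degenerate cases of \thmref{thm:classify}) and the continuation region ${\cal CR}_y=\{0<y<h(t)\}\cup\{g(t)<y<f(t)\}$, which in all cases lies inside $\{y<f(t)\}$. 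On the exercise part $v\equiv\psi$, so $v_{yy}=\psi''(y)=\tfrac1{1-\gamma}y^{\frac1{\gamma-1}-1}>0$, because there $y\le g(t)<k$; recall that $(k,t)\notin{\cal \varepsilon R}_y$ was shown in the proof of \thmref{thm:existence of u}, whence $g(t)<k<f(t)$ for $t<T$. Thus the real content of the lemma is $v_{yy}>0$ on ${\cal CR}_y$.

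Suppose for contradiction that $v_{yy}(y_0,t_0)=0$ at some interior point of ${\cal CR}_y$. In the open set ${\cal CR}_y$ the function $v$ is smooth by interior parabolic regularity, and, as derived in the proof of \thmref{thm:existence of u},
\[
-\partial_t v_{yy}-\frac{a^2}{2}y^2\partial_{yy}v_{yy}-2a^2y\,\partial_y v_{yy}+(r-a^2)v_{yy}=0\quad\text{in }{\cal CR}_y,
\]
a locally uniformly parabolic equation (for $y>0$) of which $v_{yy}\ge0$ is a solution attaining its minimum value $0$ at $(y_0,t_0)$. By the strong minimum principle, $v_{yy}$ then vanishes identically on the connected component $J\times\{t_0\}$ of the time-slice ${\cal CR}_y\cap\{t=t_0\}$ through $(y_0,t_0)$, where $J$ is one of the intervals $(0,h(t_0))$, $(g(t_0),f(t_0))$, $(0,f(t_0))$; hence $v(\cdot,t_0)$ is affine on $J$. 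Let $y_*>0$ be the endpoint of $\overline J$ lying on a free boundary ($h(t_0)$, $g(t_0)$ or $f(t_0)$ respectively), so $v(y_*,t_0)=\psi(y_*)$. Since $v\ge\psi$ with equality at $(y_*,t_0)$, since $v$ and $v_y$ are continuous there (see \eqref{continue}), and since $\psi\in C^1$ near $y_*$ (as $y_*\ne k$), the first-order condition for the minimum of $v-\psi$ gives $v_y(y_*,t_0)=\psi'(y_*)$. Therefore the affine function $v(\cdot,t_0)$ coincides on $J$ with the tangent line $L$ to the convex function $\psi$ at $y_*$. A short convexity argument then shows $L<\psi$ at every point of $(0,k)\setminus\{y_*\}$ (strict convexity of $\psi$ on $(0,k)$; and when $y_*=f(t_0)>k$, simply $L\equiv\tfrac1\gamma K^\gamma<\psi$ on $(0,k)$). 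As $J$ always contains a point $y_1\in(0,k)$ with $y_1\ne y_*$, we obtain $v(y_1,t_0)=L(y_1)<\psi(y_1)$, contradicting $v\ge\psi$. Hence $v_{yy}>0$ on ${\cal CR}_y$, and together with the exercise-region case this yields $v_{yy}>0$ for $0<y<f(t)$, $0<t<T$.

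I expect the main obstacle to be the maximum-principle step on a domain that is neither cylindrical nor, in Case~I, even non-pinching; this is why I would draw the conclusion only on the single slice $J\times\{t_0\}$ — which is harmlessly an interval — rather than propagating upward to $\{t\ge t_0\}\cap{\cal CR}_y$. Everything else is routine: the equation for $v_{yy}$ is the one already obtained in \thmref{thm:existence of u} and needs only interior smoothness of $v$ on ${\cal CR}_y$; and the $C^1$-matching $v_y(y_*,t_0)=\psi'(y_*)$ at the free boundary uses only the continuity of $v_y$ away from $(k,T)$ recorded in \eqref{continue} together with the obstacle inequality $v\ge\psi$, not any finer free-boundary regularity.
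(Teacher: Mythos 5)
Your proof is correct and follows the same route as the paper: strict positivity of $v_{yy}$ via the strong maximum principle on ${\cal CR}_y$, combined with $v_{yy}=\psi''>0$ on the exercise set below $k$. The paper's own proof is a one-line invocation of the strong maximum principle; your tangent-line/strict-convexity argument at the free-boundary endpoint supplies exactly the missing justification for why $v_{yy}$ cannot vanish identically on a time-slice component of ${\cal CR}_y$.
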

\begin{proof}
 Apply strong maximum principle,
\[
v_{yy}>0\quad in\quad {\cal CR}_y,
\]
together with
\[
v_{yy}=\psi''>0\quad in\quad {\cal \varepsilon R}_y\cap\Big((0,k)\times(0,T)\Big),
\]
then \eqref{vyy} is true.
\end{proof}

\begin{lemma}                                                \label{lem:vy0}
\begin{eqnarray}                                             \label{vy0}
\lim\limits_{y\rightarrow0+}v_y(y,t)=-\infty,\quad 0<t<T.\\  \label{vyft}
\lim\limits_{y\rightarrow f(t)-}v_y(y,t)=0,\quad 0<t<T.
\end{eqnarray}
\end{lemma}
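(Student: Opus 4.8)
The plan is to treat the two limits separately: for the blow-up \eqref{vy0} I would use only the convexity $v_{yy}>0$ together with the lower bound $v\ge\psi$, while for \eqref{vyft} I would use the explicit form of $\psi$ to the right of $k$ together with the continuity of $v_y$ across the free boundary $f(t)$.

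\emph{Step 1 (proof of \eqref{vy0}).} By Lemma \ref{lem:vyy}, $v_{yy}>0$ on $0<y<f(t)$, so $v_y(\cdot,t)$ is strictly increasing there, and since $v_y\le0$ by \eqref{v4} the limit $L(t):=\lim_{y\to0+}v_y(y,t)$ exists in $[-\infty,0]$. Suppose, for contradiction, that $L(t)>-\infty$. Fixing any $y_1\in(0,f(t))$, for $0<y<y_1$ we have $L(t)\le v_y(s,t)\le0$ on $(y,y_1)$; since $v_y(\cdot,t)$ is continuous on $(0,f(t))$ by \eqref{continue}, the fundamental theorem of calculus gives $v(y,t)=v(y_1,t)-\int_y^{y_1}v_y(s,t)\,ds\le v(y_1,t)+|L(t)|\,y_1$, so $v(\cdot,t)$ stays bounded as $y\to0+$. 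But $v(y,t)\ge\psi(y)=\frac{1-\gamma}{\gamma}y^{\gamma/(\gamma-1)}+(K-b)y$ by \eqref{v1}, and since $0<\gamma<1$ the exponent $\gamma/(\gamma-1)$ is negative, so $\psi(y)\to+\infty$ as $y\to0+$ — a contradiction. Hence $L(t)=-\infty$. I would point out that this argument covers all four cases of Theorem \ref{thm:classify} at once, since it uses only $v\ge\psi$ and Lemma \ref{lem:vyy}.

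\emph{Step 2 (proof of \eqref{vyft}).} First I would show $f(t)>k$ for every $t\in(0,T)$: the set $\{y\ge k:v(y,t)=\psi(y)\}$ is closed by continuity of $v-\psi$ (see \eqref{continue}), so $f(t)$ is attained, and if $f(t)=k$ then $v(k,t)=\psi(k)$, i.e.\ $(k,t)\in{\cal \varepsilon R}_y$, contradicting the fact established in the proof of Theorem \ref{thm:existence of u} that $(k,t)\notin{\cal \varepsilon R}_y$ for $t<T$. Next, by Lemma \ref{lem:inCR} every point $(y,t)$ with $y>f(t)$ lies in ${\cal \varepsilon R}_y$, and since $y>f(t)>k$ we have $\psi(y)=\frac1\gamma K^\gamma$, whence $v(y,t)=\frac1\gamma K^\gamma$ and $v_y(y,t)=0$ for all $y>f(t)$. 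Finally, since $f(t)>k$ the point $(f(t),t)$ avoids the single exceptional point $(k,T)$, so $v_y$ is continuous at $(f(t),t)$ by \eqref{continue}; letting $y\uparrow f(t)$ yields $\lim_{y\to f(t)-}v_y(y,t)=v_y(f(t),t)=\lim_{y\to f(t)+}v_y(y,t)=0$.

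The only delicate point is the smooth fit at $y=f(t)$, i.e.\ ruling out a jump of $v_y$ across the free boundary; here it costs nothing, because $f(t)>k$ already places the free boundary inside the region where \eqref{continue} guarantees continuity of $v_y$, so no separate smooth-fit argument is needed. Everything else is elementary once Lemmas \ref{lem:inCR} and \ref{lem:vyy}, the bound \eqref{v1}, and the explicit formula \eqref{psi} are in hand.
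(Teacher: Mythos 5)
Your proposal is correct and follows essentially the same route as the paper: for \eqref{vy0} both arguments combine the convexity of $v(\cdot,t)$ with $v\ge\psi\to+\infty$ as $y\to0+$ (the paper bounds $v_y$ directly by a difference quotient rather than arguing by contradiction, but the substance is identical), and for \eqref{vyft} both rest on the continuity of $v_y$ across the free boundary $y=f(t)$ together with $v\equiv\psi\equiv\frac1\gamma K^\gamma$ for $y\ge f(t)$. Your extra observations (that $f(t)>k$ and that $(f(t),t)$ avoids the exceptional point $(k,T)$ in \eqref{continue}) merely make explicit details the paper leaves implicit.
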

\begin{proof}
For any $t\in(0,T)$, it is not hard to see that $\lim\limits_{y\rightarrow 0+}v(y,t)\geq\lim\limits_{y\rightarrow 0+}\psi(y)=+\infty$, and by $v_{yy}\geq0$, thus for some fix $y_0>0$,
\[
v_y(y,t)\leq \frac{v(y_0,t)-v(y,t)}{y_0-y}\rightarrow-\infty,\quad y\rightarrow0+.                                                                                  \]
\eqref{vyft} is due to $v_y$ is continuously through the free boundary $y=f(t)$.
\end{proof}

Thanks to Lemma \ref{lem:vyy} and Lemma \ref{lem:vy0}, we can define a transformation
\begin{eqnarray*}
y=J(x,t)=
\left\{
  \begin{array}{ll}
    (v_y(\cdot,t))^{-1}(-x), & \hbox{for}\quad x>0;\\
    f(t), & \hbox{for}\quad x=0,\\
  \end{array}
\right.,
\;0<t<T,
\end{eqnarray*}
then $J(x,t)\in C([0,+\infty)\times(0,T))$ and is decreasing to $x$.

\begin{lemma}                                                \label{lem:JxT}
\begin{eqnarray}                                             \label{Jx0}
&&\lim\limits_{x\rightarrow 0+}J(x,t)=f(t),\quad 0<t<T,\\      \label{Jxinfty}
&&\quad\lim\limits_{x\rightarrow +\infty}J(x,t)=0,\quad 0<t<T,\\ \label{JxT}
&&\lim\limits_{t\rightarrow T-}J(x,t)=\varphi'(x),\quad x\geq0.
\end{eqnarray}
\end{lemma}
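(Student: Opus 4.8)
The plan is to handle the three limits separately, with \eqref{Jx0} and \eqref{Jxinfty} being essentially immediate and \eqref{JxT} carrying the real content. For the first two: by \lemref{lem:vyy} and \lemref{lem:vy0}, for each fixed $t\in(0,T)$ the map $y\mapsto v_y(y,t)$ is continuous and strictly increasing on $(0,f(t))$ and maps it bijectively onto $(-\infty,0)$, so $J(\cdot,t)=(v_y(\cdot,t))^{-1}(-\,\cdot)$ is continuous and strictly decreasing from $(0,+\infty)$ onto $(0,f(t))$. Then \eqref{Jxinfty} follows from $v_y(0+,t)=-\infty$, i.e.\ \eqref{vy0}, and \eqref{Jx0} follows from $v_y(f(t)-,t)=0$, i.e.\ \eqref{vyft}, together with the convention $J(0,t)=f(t)$ (this last point also gives continuity of $J(\cdot,t)$ at $x=0$).

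For \eqref{JxT} the key input is
\[
\lim_{t\to T-}v_y(y,t)=\psi'(y)\qquad\text{for every }y\neq k.
\]
I would get this from \eqref{continue}: $v_y$ extends continuously to $\{t=T\}$ off the single point $(k,T)$, and its trace there must coincide with $\psi'$ because $v(\cdot,T-)=\psi$ (either integrate $v_y(\cdot,t)$ over a compact subinterval of $(0,k)$ or of $(k,+\infty)$ and pass to the limit, or use that the convex functions $v(\cdot,t)$ converge pointwise to the convex function $\psi$, so their derivatives converge at every point of differentiability of $\psi$). I then recall, from \eqref{psi}--\eqref{k x}, that $\psi'$ is strictly increasing on $(0,k)$ with $\psi'(0+)=-\infty$ and $\psi'(k-)=-\widehat x$, while $\psi'\equiv0$ on $(k,+\infty)$; dually, $\varphi'(x)=(x-b+K)^{\gamma-1}$ is the unique root of $\psi'(\cdot)=-x$ and lies in $(0,k)$ when $x>\widehat x$, whereas $\varphi'(x)=k$ for $0\le x\le\widehat x$.

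The conclusion \eqref{JxT} then follows by a monotonicity sandwich that deliberately never evaluates $v_y(\cdot,t)$ at $y=k$. Fix $x>0$. If $x>\widehat x$, put $y^*=\varphi'(x)\in(0,k)$, so $\psi'(y^*)=-x$; for small $\delta>0$ with $0<y^*-\delta<y^*+\delta<k$ one has $\psi'(y^*-\delta)<-x<\psi'(y^*+\delta)$, hence, by the displayed limit, $v_y(y^*-\delta,t)<-x<v_y(y^*+\delta,t)$ for $t$ close to $T$, and since $v_y(\cdot,t)$ is increasing this forces $y^*-\delta<J(x,t)<y^*+\delta$; let $\delta\to0$. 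If $0<x\le\widehat x$, then $\varphi'(x)=k$, and for small $\delta>0$ we have $\psi'(k-\delta)<-\widehat x\le-x$ and $\psi'(k+\delta)=0>-x$, so the same argument gives $k-\delta<J(x,t)<k+\delta$ for $t$ near $T$. For $x=0$ we instead use $J(0,t)=f(t)$ together with $f(T-)=k=\varphi'(0)$, which holds in every case of \thmref{thm:classify}.

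The main obstacle is the first step, namely rigorously identifying $\lim_{t\to T-}v_y(\cdot,t)$ with $\psi'$ away from $y=k$: $v_y$ genuinely jumps across $(k,T)$ and the continuation region collapses to $\{y=k\}$ as $t\to T$ (see Figures 4.1--4.4), so the identification has to be done with care on each side of $k$. Once this is in hand, the remaining sandwich argument is elementary, and it is arranged so that only the one-sided values $\psi'(k-)=-\widehat x$ and $\psi'(k+)=0$ are ever needed near the bad point.
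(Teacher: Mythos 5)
Your proposal is correct and follows essentially the same route as the paper: both prove \eqref{Jx0} and \eqref{Jxinfty} from \lemref{lem:vy0}, and both prove \eqref{JxT} by identifying $\lim_{t\to T-}v_y(\cdot,t)$ with $\psi'$ away from $y=k$ (via \eqref{continue}) and then sandwiching $J(x,t)$ using the monotonicity of $v_y(\cdot,t)$, splitting into the cases $x>\widehat x$ and $0\le x\le\widehat x$. The only cosmetic difference is that for $0\le x\le\widehat x$ the paper takes the upper bound directly from $J(x,t)\le f(t)\to k$ and lets $y<k$ be arbitrary for the lower bound, whereas you evaluate $v_y$ at $k\pm\delta$; these are the same argument.
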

\begin{proof}
\eqref{Jx0} and \eqref{Jxinfty} are the results of Lemma \ref{lem:vy0}. Now we prove \eqref{JxT}.

{\bf The case of $x>\widehat{x}$.}
Owing to the regularity of $v_y$ on $t=T$,
\[
\lim\limits_{t\rightarrow T-}v_y(y,t)=\psi'(y),\quad 0<y<k.
\]
Notice that $\psi'(y)$ maps onto $(-\infty,-\widehat{x})$ for $0<y<k$, and $\psi''(y)>0,\;0<y<k$, thus
\begin{eqnarray}                                                      \label{limJ1}
\lim\limits_{t\rightarrow T-}J(x,t)=\lim\limits_{t\rightarrow T-}(v_y(\cdot,t))^{-1}(-x)=(\psi'(\cdot))^{-1}(-x)=\varphi'(x),\quad x>\widehat{x}.
\end{eqnarray}

{\bf The case of $0\leq x\leq\widehat{x}$}.
Due to $(v_y(\cdot,t))^{-1}(-x)$ is decreasing to $x$ for all $t\in(0,T)$,
\begin{eqnarray}                                                      \label{vvv}
(v_y(\cdot,t))^{-1}(-x)\leq(v_y(\cdot,t))^{-1}(0)=f(t).
\end{eqnarray}
If $y<k$, then $\lim\limits_{t\rightarrow T-}v_y(y,t)=\psi'(y)<-\widehat{x}\leq-x$, so when $t$ is sufficient close to $T$,
\[
v_y(y,t)<-x,
\]
thus
\[
(v_y(\cdot,t))^{-1}(-x)>y,
\]
hence
\[
\liminf\limits_{t\rightarrow T-}(v_y(\cdot,t))^{-1}(-x)\geq y,
\]
by the arbitrariness of $y<k$, we see that
\[
\liminf\limits_{t\rightarrow T-}(v_y(\cdot,t))^{-1}(-x)\geq k.
\]
Together with \eqref{vvv},
\[
k\leq\liminf\limits_{t\rightarrow T-}(v_y(\cdot,t))^{-1}(-x)\leq\limsup\limits_{t\rightarrow T-}(v_y(\cdot,t))^{-1}(-x)\leq\lim\limits_{t\rightarrow T-}f(t)=k,
\]
thus
\begin{eqnarray}                                                      \label{limJ2}
\lim\limits_{t\rightarrow T-}J(x,t)=\lim\limits_{t\rightarrow T-}(v_y(\cdot,t))^{-1}(-x)=k=\varphi'(x),\quad 0\leq x\leq \widehat{x}.
\end{eqnarray}
\eqref{JxT} follows from \eqref{limJ1} and \eqref{limJ2}.
\end{proof}

Now, we set
\begin{eqnarray}                                  \label{Vmin}
\widehat{V}(x,t)=\min\limits_{y>0}(v(y,t)+xy).
\end{eqnarray}
\begin{theorem}                                                \label{thm:V}
$\widehat{V}$ is the strong solution of \eqref{V} and satisfies the following
\begin{eqnarray}                                  \label{V1}
\widehat{V},\;\widehat{V}_x\in C(Q_x).
\end{eqnarray}
Moreover,
\begin{eqnarray}                                  \label{V2}
\widehat{V}_t\leq0,\quad \widehat{V}_x>0,\quad \widehat{V}_{xx}<0,\quad (x,t)\in Q_x.
\end{eqnarray}
\begin{eqnarray}                                  \label{V3}
\lim\limits_{x\rightarrow+\infty}\widehat{V}(x,t)=+\infty,\quad \lim\limits_{x\rightarrow+\infty}\widehat{V}_x(x,t)=0,\quad \forall
t\in(0,T).
\end{eqnarray}
\end{theorem}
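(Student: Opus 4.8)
The plan is to read $\widehat{V}$ as the inverse dual (concave Legendre) transform of the solution $v$ of \eqref{v} and to transport, one by one, the properties of $v$ established in \thmref{thm:existence of u}, \lemref{lem:vyy}, \lemref{lem:vy0} and \lemref{lem:JxT} through this transform. First I would make the representation explicit: by \lemref{lem:vyy} and \lemref{lem:vy0}, for each fixed $t$ the map $y\mapsto v_y(y,t)$ is a strictly increasing bijection of $(0,f(t))$ onto $(-\infty,0)$, while $v_y\equiv0$ on $[f(t),+\infty)$; hence for $x>0$ the minimum in \eqref{Vmin} is attained at the unique $y=J(x,t)$ with $v_y(y,t)=-x$, and for $x=0$ at $y=f(t)=J(0,t)$, so that
\[
\widehat{V}(x,t)=v(J(x,t),t)+xJ(x,t).
\]
Since the $y$-derivative of $v(y,t)+xy$ vanishes at $y=J(x,t)$, differentiation gives $\widehat{V}_x=J$; implicit differentiation of $v_y(J(x,t),t)=-x$ gives $J_x=-1/v_{yy}(J,t)$, so (where $v$ is regular enough) $\widehat{V}_{xx}=-1/v_{yy}(J,t)$ and $\widehat{V}_t=v_t(J,t)$.

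With these formulas, \eqref{V1}, \eqref{V2} and \eqref{V3} are essentially bookkeeping. Continuity \eqref{V1} holds because $\widehat{V}_x=J\in C(Q_x)$ (noted just before the theorem) and $v\in C(\overline{Q_y})$, so $\widehat{V}(x,t)=v(J(x,t),t)+xJ(x,t)$ is continuous. For \eqref{V2}: $\widehat{V}_x=J>0$, $\widehat{V}_{xx}=-1/v_{yy}<0$ by \lemref{lem:vyy}, and $\widehat{V}_t=v_t\le0$ by \eqref{v3}. For \eqref{V3}: $\widehat{V}_x=J\to0$ as $x\to+\infty$ by \eqref{Jxinfty}, while $\widehat{V}\ge v(J,t)\ge\psi(J(x,t))\to+\infty$ since $J(x,t)\to0+$ and $\psi(y)=\frac{1-\gamma}{\gamma}y^{\gamma/(\gamma-1)}+(K-b)y\to+\infty$ as $y\to0+$ (here $\gamma/(\gamma-1)<0$). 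I would also record the boundary and terminal data: $\widehat{V}(0,t)=v(f(t),t)=\psi(f(t))=\frac1\gamma K^\gamma$ since $f(t)\ge k$ and $f(t)\in{\cal \varepsilon R}_y$; and, using \lemref{lem:JxT} ($J(x,t)\to\varphi'(x)$ as $t\to T-$) together with $v(\cdot,T-)=\psi$, one gets $\widehat{V}(x,T-)=\psi(\varphi'(x))+x\varphi'(x)=\varphi(x)$, the last step being the identity $\varphi(x)=\min_{y>0}(\psi(y)+xy)$, whose minimizer is $y=\varphi'(x)$.

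The core is the variational inequality in \eqref{V}. One inclusion is soft: $\widehat{V}(x,t)=\min_y(v(y,t)+xy)\ge\min_y(\psi(y)+xy)=\varphi(x)$ since $v\ge\psi$, so $\widehat{V}\ge\varphi$ everywhere. For the differential operator, writing $y=J(x,t)$, $x=-v_y(y,t)$ and $\widehat{V}=v(y,t)-y\,v_y(y,t)$, a direct substitution of the derivative formulas yields
\begin{eqnarray*}
&&-\widehat{V}_t+\frac{a^2}{2}\frac{\widehat{V}_x^2}{\widehat{V}_{xx}}-rx\widehat{V}_x+\beta\widehat{V}\\
&=&\Big(-v_t-\frac{a^2}{2}y^2v_{yy}-(\beta-r)yv_y+\beta v\Big)\Big|_{y=J(x,t)}\geq0
\end{eqnarray*}
by \eqref{condition1}. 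It remains to show that $\widehat{V}(x,t)>\varphi(x)$ forces $J(x,t)\in{\cal CR}_y$ — where $v$ solves the linear equation, so the right side above, hence the operator on $\widehat{V}$, vanishes. Suppose instead $J(x,t)\in{\cal \varepsilon R}_y$. If $x=0$, then $J(0,t)=f(t)\ge k$ and $\widehat{V}(0,t)=\psi(f(t))=\frac1\gamma K^\gamma=\varphi(0)$, a contradiction. If $x>0$, then $J(x,t)\ne k$ because $(k,t)\notin{\cal \varepsilon R}_y$, and on ${\cal \varepsilon R}_y$ one has $v_y=\psi'$, hence $\psi'(J)=-x<0$; since $\psi'\le0$ with $\psi'=0$ precisely on $(k,+\infty)$, this forces $0<J<k$, which via \eqref{k x} gives $x=x_\varphi(J)>\widehat{x}$, so $\varphi'(x)=J$ and, by the same Fenchel identity, $\widehat{V}(x,t)=\psi(J)+xJ=\varphi(x)$ — again a contradiction. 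Thus \eqref{V} holds.

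Finally I would upgrade this to a strong ($W^{2,1}_{p,loc}$) solution: since $v\in W^{2,1}_{p,loc}(Q_y\setminus B_\varepsilon(k,T))$ and the change of variables $x\mapsto J(x,t)$ is $C^1$ with $J_x=-1/v_{yy}\ne0$, it suffices to check that, away from the point corresponding to $(k,T)$, $v_{yy}$ is locally bounded and bounded away from $0$ (interior parabolic estimates and the strong maximum principle in ${\cal CR}_y$; $v_{yy}=\psi''>0$ on ${\cal \varepsilon R}_y\cap\{y<k\}$; and $v_{yy}$ bounded below up to $y=f(t)$, read off the equation on the free boundary using $v_y(f(t),t)=0$ and $v_t\le0$), so that $\widehat{V}\in W^{2,1}_{p,loc}(Q_x)$ and the identity above holds a.e. I expect the two genuinely delicate points to be: (i) the equivalence ``$\widehat{V}>\varphi\Leftrightarrow J\in{\cal CR}_y$'', which rests on handling the non-strict concavity of $\varphi$ (the linear segment $0<x<\widehat{x}$ corresponding to $y\ge k$) together with the fact, proved in \thmref{thm:existence of u}, that $(k,t)$ is never in ${\cal \varepsilon R}_y$; and (ii) the regularity of $\widehat{V}$ near $x=0$ and near the image of $(k,T)$, where one must rule out $v_{yy}\to0$, equivalently $\widehat{V}_{xx}\to-\infty$.
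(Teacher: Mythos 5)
Your proposal is correct and follows essentially the same route as the paper: represent $\widehat{V}$ via the minimizer $y=J(x,t)$, read off $\widehat{V}_x=J$, $\widehat{V}_{xx}=-1/v_{yy}$, $\widehat{V}_t=v_t$, and reduce the variational inequality for $\widehat{V}$ to that for $v$, with the contact-set equivalence $\widehat{V}>\varphi\Leftrightarrow v(J,\cdot)>\psi(J)$ handled through the Fenchel identity exactly as in the paper (your case split on $x=0$ versus $x>0$ is just a reorganization of the paper's split on $y\geq k$ versus $y<k$). The only addition is your closing discussion of the $W^{2,1}_{p,loc}$ regularity transfer, which the paper asserts without detail; that is a reasonable refinement rather than a different method.
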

\begin{proof}
From Lemma \ref{lem:vyy} and Lemma \ref{lem:vy0}, it is easily seen that $J(x,t)\in \arg \min\limits_{y>0}(v(y,t)+xy)$ for all $(x,t)\in Q_x$, thus
\[
\widehat{V}(x,t)=v(J(x,t),t)+xJ(x,t),\quad(x,t)\in Q_x.
\]
In addition
\begin{eqnarray}                             \label{Vx}
&&\widehat{V}_x(x,t)=v_y(J(x,t),t)J_x(x,t)+xJ_x(x,t)+J(x,t)=J(x,t)\geq0,\\\label{Vxx}
&&\widehat{V}_{xx}(x,t)=J_x(x,t)=\partial_x[(v_y(\cdot,t))^{-1}(x)]=\frac{-1}{v_{yy}(J(x,t),t)}<0,\\\label{Vt}
&&\widehat{V}_t(x,t)=v_y(J(x,t),t)J_t(x,t)+v_t(J(x,t),t)+xJ_t(x,t)=v_t(J(x,t),t)\leq0,
\end{eqnarray}
Due to $J(x,t)\in C(Q_x)$, \eqref{V1} is true.
Moreover
\begin{eqnarray*}
\lim\limits_{x\rightarrow+\infty}\widehat{V}(x,t)
&=&\lim\limits_{x\rightarrow+\infty}[v(J(x,t),t)+xJ(x,t)]\\ &\geq&\lim\limits_{x\rightarrow+\infty}v(J(x,t),t)=\lim\limits_{y\rightarrow0+}v(y,t)=+\infty,\quad 0<t<T.
\end{eqnarray*}
\[
\lim\limits_{x\rightarrow+\infty}\widehat{V}_x(x,t)=\lim\limits_{x\rightarrow+\infty}J(x,t)=0,\quad 0<t<T.
\]
Here, we verify $\widehat{V}$ is the strong solution of \eqref{V}.
Firstly,
\[
\lim\limits_{x\rightarrow 0+}\widehat{V}(x,t)=\lim\limits_{x\rightarrow 0+}[v(J(x,t),t)+xJ(x,t)]=v(f(t),t)=\frac{1}{\gamma}K^\gamma,\quad t\in (0,T),
\]
\begin{eqnarray*}
\lim\limits_{t\rightarrow T-}\widehat{V}(x,t)
&=&\lim\limits_{t\rightarrow T-}[v(J(x,t),t)+xJ(x,t)]\\
&=&v(\lim\limits_{t\rightarrow T-}J(x,t),T)+x\lim\limits_{t\Rightarrow T-}J(x,t)\\
&=&\psi(\varphi'(x))+x\varphi'(x)\\
&=&\varphi(x),\quad x\geq 0,
\end{eqnarray*}
so $\widehat{V}$ meets the boundary and terminal conditions in \eqref{V}.

Secondly, due to \eqref{Vx}, \eqref{Vxx} and \eqref{Vt},
{\small
\begin{eqnarray}                                                  \label{LV=lv}
\Big(-\widehat{V}_t+\frac{a^2}{2}\frac{\widehat{V}^2_x}{\widehat{V}_{xx}}-rx\widehat{V}_x+\beta\widehat{V}\Big)(x,t) =\Big(-v_t-\frac{a^2}{2}y^2v_{yy}-(\beta-r)yv_y+\beta v\Big)(J(x,t),t) \geq0.
\end{eqnarray}
}
On the other hand,
\begin{eqnarray*}
\begin{array}{l}
\quad v(y,t)\geq\psi(y),\;\forall\;y>0\\
\Rightarrow\min\limits_{y>0}(v(y,t)+xy)\geq\min\limits_{y>0}(\psi(y)+xy), \;\forall\;x\geq0\\
\Rightarrow \widehat{V}(x,t)\geq \varphi(x), \;\forall\;x\geq0.
\end{array}
\end{eqnarray*}
Hence,
\[
\min\Big\{-\widehat{V}_t+\frac{a^2}{2}\frac{\widehat{V}^2_x}{\widehat{V}_{xx}}-rx\widehat{V}_x+\beta\widehat{V},\widehat{V}-\varphi\Big\}\geq0\quad \hbox{in}\quad Q_x.
\]
Now, we prove that
\begin{eqnarray}                                                                         \label{V=0}
\widehat{V}(x,t)>\varphi(x)\Rightarrow \Big(-\widehat{V}_t+\frac{a^2}{2}\frac{\widehat{V}^2_x}{\widehat{V}_{xx}}-rx\widehat{V}_x+\beta\widehat{V}\Big)(x,t)=0.
\end{eqnarray}
Before that we first claim
\begin{eqnarray}                                        \label{V>varphi}
\widehat{V}(x,t)>\varphi(x)\Rightarrow v(J(x,t),t)>\psi(J(x,t)),
\end{eqnarray}
Let $y=J(x,t)$, if $v(y,t)=\psi(y)$ for $y\geq k$, then
\begin{eqnarray*}
\begin{array}{l}
\quad v(y,t)=\psi(y)=\frac{1}{\gamma}K^\gamma\\
\Rightarrow v_y(y,t)=0\\
\Rightarrow x=0\\
\Rightarrow \widehat{V}(x,t)=\frac{1}{\gamma}K^\gamma=\varphi(x),
\end{array}
\end{eqnarray*}
And if $v(y,t)=\psi(y)$ for $y<k$, then
\begin{eqnarray*}
\begin{array}{l}
\quad v(y,t)=\psi(y)=\frac{1-\gamma}{\gamma}y^{\frac{\gamma}{\gamma-1}}+(K-b)y\\
\Rightarrow x=-v_y(y,t)=y^{\frac{1}{\gamma-1}}-(K-b)\\
\Rightarrow \widehat{V}(x,t)=v(y,t)+xy =\frac{1-\gamma}{\gamma}y^{\frac{\gamma}{\gamma-1}}+(K-b)y+(y^{\frac{1}{\gamma-1}}-(K-b))y=\varphi(x),
\end{array}
\end{eqnarray*}
Hence, \eqref{V>varphi} is true.

Combine with \eqref{V>varphi}, the variational inequality in \eqref{v} and \eqref{LV=lv} yields
\begin{eqnarray*}
\begin{array}{l}
\quad \widehat{V}(x,t)>\varphi(x)\\
\Rightarrow v(J(x,t),t)>\psi(J(x,t))\\
\Rightarrow \Big(-v_t-\frac{a^2}{2}y^2v_{yy}-(\beta-r)yv_y+\beta v\Big)(J(x,t),t)=0\\
\Rightarrow \Big(-\widehat{V}_t+\frac{a^2}{2}\frac{\widehat{V}^2_x}{\widehat{V}_{xx}}-rx\widehat{V}_x+\beta\widehat{V}\Big) (x,t)=0.
\end{array}
\end{eqnarray*}
Therefore, $\widehat{V}(x,t)$ satisfies the variational inequality in $\eqref{V}$.
So far, we have proved $\widehat{V}(x,t)$ is the strong solution of \eqref{V}.
\end{proof}
\bigskip

Now, we discuss the free boundary of \eqref{V}. Define
\begin{eqnarray*}
{\cal \varepsilon R}_x &=& \{\widehat{V}=\varphi\},\quad\hbox{exercise region}, \\
\quad{\cal CR}_x &=& \{\widehat{V}>\varphi\},\quad\hbox{continuation region}.
\end{eqnarray*}
And
\begin{eqnarray*}
&&H(t)=\sup\{x\geq0|\widehat{V}(x,t)=\varphi(x)\},\quad 0<t<T,\\
&&G(t)=\inf\{x\geq0|\widehat{V}(x,t)=\varphi(x)\},\quad 0<t<T.
\end{eqnarray*}

On the two free boundaries $y=h(t)$ and $y=g(t)$,
\begin{eqnarray*}
&&v(y,t)=\frac{1-\gamma}{\gamma}y^{\frac{\gamma}{\gamma-1}}+(K-b)y, \\
&&v_y(y,t)=-y^{\frac{1}{\gamma-1}}+(K-b).
\end{eqnarray*}

Note that
\begin{eqnarray}                                               \label{x=-vy}
x=-v_y(y,t).
\end{eqnarray}
Then the corresponding two free boundaries of \eqref{V} are
\begin{eqnarray*}
&&H(t)=-v_y(h(t),t)=h(t)^{\frac{1}{\gamma-1}}-(K-b), \\
&&G(t)=-v_y(g(t),t)=g(t)^{\frac{1}{\gamma-1}}-(K-b).
\end{eqnarray*}
Moreover
\begin{eqnarray*}
&&H'(t)=\frac{1}{\gamma-1}h(t)^{\frac{1}{\gamma-1}-1}h'(t)\geq0, \\                                               &&G'(t)=\frac{1}{\gamma-1}g(t)^{\frac{1}{\gamma-1}-1}g'(t)\leq0,
\end{eqnarray*}
and
\begin{eqnarray*}
&&H(T-)=h(T-)^{\frac{1}{\gamma-1}}-(K-b), \\
&&G(T-)=g(T-)^{\frac{1}{\gamma-1}}-(K-b).
\end{eqnarray*}

On the other hand, by \eqref{Vx} and \eqref{Jx0},
\[
\widehat{V}_x(0,t)=J(0,t)=(v(\cdot,t))^{-1}(0)=f(t).
\]
All this leads up to

\begin{theorem}\label{thm:case}
The two free boundaries of \eqref{V} satisfy $H(t),\;G(t)\in C^\infty(0,T)$ and
$H'(t)\geq0,\;G'(t)\leq0$, $\widehat{V}_x(0,t)=f(t)$.
 Moreover, they have the following classification.

\noindent {\bf Case I: $\beta\geq\frac{a^2}{2}\frac{\gamma}{1-\gamma}+r\gamma$, $\Psi (k)\geq 0$.}
\[
H(t)\equiv +\infty ,\quad k^{\frac{1}{\gamma-1}}-(K-b)=G(T-)\leq G(t),
\]
i.e.
\[
H(t)\equiv +\infty ,\quad \widehat{x}=G(T-)\leq G(t),
\]
see Fig 5.1.

\noindent {\bf Case II: $\beta\geq\frac{a^2}{2}\frac{\gamma}{1-\gamma}+r\gamma$, $\Psi (k)< 0$.}

If $\beta>\frac{a^2}{2}\frac{\gamma}{1-\gamma}+r\gamma$,
\[
H(t)\equiv+\infty,\quad y_T=\Big(\frac{-r(K-b)}{\frac{\beta-r\gamma}{\gamma}-\frac{a^2}{2}\frac{1}{1-\gamma}}\Big)-(K-b)
=G(T-)<G(t),
\]
see Fig 5.2.

If $\beta=\frac{a^2}{2}\frac{\gamma}{1-\gamma}+r\gamma$,
\[
{\cal \varepsilon R}_x=\emptyset,
\]
see Fig 5.4.

\noindent {\bf Case III: $\beta<\frac{a^2}{2}\frac{\gamma}{1-\gamma}+r\gamma$, $\Psi (k)> 0$.}
\[
k^{\frac{1}{\gamma-1}}-(K-b)=G(T-)\leq G(t)\leq H(t)\leq
H(T-)=\Big(\frac{-r(K-b)}{\frac{\beta-r\gamma}{\gamma}-\frac{a^2}{2}\frac{1}{1-\gamma}}\Big)-(K-b),
\]
see Fig 5.3.

\noindent {\bf Case IV: $\beta<\frac{a^2}{2}\frac{\gamma}{1-\gamma}+r\gamma$, $\Psi (k)\leq 0$.}
\[
{\cal \varepsilon R}_x=\emptyset,
\]
see Fig 5.4.
\end{theorem}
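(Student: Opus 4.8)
The plan is to transport every claim about the original free boundaries $H(t),\,G(t)$ back to the already–established facts about the dual free boundaries $h(t),\,g(t),\,f(t)$ via the inverse dual map $y=J(x,t)$. The key identity is $x=-v_y(y,t)$ from \eqref{x=-vy}: on the piece $\{0<y<k\}$ of the exercise region ${\cal \varepsilon R}_y$ one has $v=\psi$, so by \eqref{psi}
\[
v_y(y,t)=-y^{\frac{1}{\gamma-1}}+(K-b),\qquad\text{hence}\qquad x=y^{\frac{1}{\gamma-1}}-(K-b).
\]
Since $\gamma-1<0$, the map $y\mapsto y^{1/(\gamma-1)}-(K-b)$ is strictly decreasing, so the slab $\{h(t)\leq y\leq g(t)\}$ of ${\cal \varepsilon R}_y$ (described in Lemma \ref{lem:inCR}) is carried onto the interval $[G(t),H(t)]$ with
\[
H(t)=h(t)^{\frac{1}{\gamma-1}}-(K-b),\qquad G(t)=g(t)^{\frac{1}{\gamma-1}}-(K-b),
\]
which are exactly the formulas written just before the statement, and $H(t)=+\infty$ precisely when $h(t)\equiv0$.

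The next step is to pin down ${\cal \varepsilon R}_x$ completely. By the claim \eqref{V>varphi} established inside the proof of Theorem \ref{thm:V}, $\widehat{V}(x,t)=\varphi(x)$ if and only if $v(J(x,t),t)=\psi(J(x,t))$, so ${\cal \varepsilon R}_x$ is the $J$–image of ${\cal \varepsilon R}_y$. The component $\{y\geq f(t)\}$ of ${\cal \varepsilon R}_y$ — where $f(t)>k$ for $t<T$ (because $(k,t)\notin{\cal \varepsilon R}_y$ by Theorem \ref{thm:existence of u}), $v\equiv\frac{1}{\gamma}K^\gamma$ and $v_y\equiv0$ — is collapsed to the single degenerate boundary point $x=0$; the continuation component $(g(t),f(t))$ is mapped, through the strictly increasing function $v_y$ (Lemma \ref{lem:vyy} gives $v_{yy}>0$, and \eqref{vyft} gives $v_y\to0$ at $y=f(t)$), onto $(0,G(t))$; and when $h(t)>0$ the component $(0,h(t))$ is mapped onto $(H(t),+\infty)$. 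Hence ${\cal \varepsilon R}_x=\{0\}\cup[G(t),H(t)]$, so that $\sup\{x\geq0:\widehat{V}(x,t)=\varphi(x)\}=H(t)$ and the genuine lower free boundary is $G(t)$; in particular, in the cases where ${\cal \varepsilon R}_y\cap((0,k)\times(0,T))=\emptyset$ (the subcase $\beta=\frac{a^2}{2}\frac{\gamma}{1-\gamma}+r\gamma$, $\Psi(k)<0$, and Case IV) the only surviving piece is the collapsed point $x=0$, so ${\cal \varepsilon R}_x=\emptyset$.

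The remaining assertions are then routine transcriptions. For smoothness, $h,g\in C^\infty(0,T)$ by Theorem \ref{thm:classify} and $y\mapsto y^{1/(\gamma-1)}-(K-b)$ is $C^\infty$ away from $0$, so $H,G\in C^\infty(0,T)$; differentiating the displayed formulas gives
\[
H'(t)=\frac{1}{\gamma-1}h(t)^{\frac{1}{\gamma-1}-1}h'(t)\geq0,\qquad G'(t)=\frac{1}{\gamma-1}g(t)^{\frac{1}{\gamma-1}-1}g'(t)\leq0,
\]
using $\gamma-1<0$ together with $h'\leq0$, $g'\geq0$ from Theorem \ref{thm:classify}. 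The boundary value $\widehat{V}_x(0,t)=f(t)$ is immediate from $\widehat{V}_x=J$ in \eqref{Vx} and $J(0,t)=f(t)$ in \eqref{Jx0}. Finally, the four–case classification follows by inserting the explicit values of $h(t),g(t),h(T-),g(T-)$ supplied by Theorem \ref{thm:classify} into the two displayed formulas for $H$ and $G$; for instance $g(T-)=k$ yields $G(T-)=k^{1/(\gamma-1)}-(K-b)=\widehat{x}$ by the second equation of \eqref{k x}, $h(t)\equiv0$ yields $H(t)\equiv+\infty$, and $h(T-)=y_T$ yields the stated $H(T-)$ in Case III.

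I expect the only genuinely delicate point to be the bookkeeping around $y=k$ and $x=0$: verifying that the $\{y\geq f(t)\}$ portion of ${\cal \varepsilon R}_y$ contributes nothing to the free boundary beyond the degenerate point $x=0$, so that ${\cal \varepsilon R}_x=\{0\}\cup[G(t),H(t)]$ and the nontrivial exercise region is precisely $[G(t),H(t)]$. This rests on knowing that $v_y<0$ throughout $\{0<y<f(t)\}$ and $v_y\equiv0$ on $\{y\geq f(t)\}$ — i.e. on Lemmas \ref{lem:vyy} and \ref{lem:vy0} — together with $f(T-)=k$ and the monotonicities of $h,g,f$ from Theorem \ref{thm:classify}. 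Everything else is a mechanical pushforward of already–proved statements through the inverse dual transformation.
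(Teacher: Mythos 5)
Your proposal is correct and follows essentially the same route as the paper, which proves this theorem in the discussion immediately preceding the statement: the dual free boundaries $h,g,f$ of \eqref{v} are pushed forward through $x=-v_y(y,t)$ to give the explicit formulas for $H,G$, their monotonicity and smoothness, and $\widehat{V}_x(0,t)=f(t)$, after which the four cases are read off from Theorem \ref{thm:classify}. Your additional care in identifying ${\cal \varepsilon R}_x$ as the image of ${\cal \varepsilon R}_y$ via \eqref{V>varphi} (with the component $\{y\geq f(t)\}$ collapsing to the degenerate point $x=0$) makes explicit a step the paper leaves implicit, but it is the same argument.
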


\begin{center}
  \begin{picture}(0,0)
    \put(-180,-90){\vector(1,0){150}}
    \put(-170,-100){\vector(0,1){95}}
    \put(-190,-20){$T$}
    \put(-170,-20){\line(1,0){140}}
    \put(-140,-15){$\widehat{x}$}
    \put(-130,-20){\circle*{2}}
    \put(-130,-70){${\cal CR}_x$}
    \put(-80,-70){${\cal \varepsilon R}_x$}
    \qbezier(-90,-90)(-100,-30)(-130,-20)
    \put(-25,-102){$x$}
    \put(-100,-40){$G(t)$}
    \put(-180,-120){Fig 5.1. \;$\beta\geq\frac{a^2}{2}\frac{\gamma}{1-\gamma}+r\gamma$, $\Psi (k)\geq 0$.}
    \end{picture}
  \begin{picture}(0,-200)

    \put(50,-90){\vector(1,0){150}}
    \put(60,-90){\vector(0,1){95}}
    \put(40,-20){$T$}
    \put(60,-20){\line(1,0){140}}
    \put(100,-70){${\cal CR}_x$}
    \put(150,-70){${\cal \varepsilon R}_x$}
    \qbezier(140,-90)(130,-30)(120,-20)
    \put(205,-102){$x$}
    \put(100,-15){$\widehat{x}$}
    \put(100,-20){\circle*{2}}
    \put(130,-40){$G(t)$}
    \put(50,-120){Fig 5.2. \;$\beta>\frac{a^2}{2}\frac{\gamma}{1-\gamma}+r\gamma$, $\Psi (k)< 0$.}
  \end{picture}
\end{center}
\bigskip
\vspace{4cm}

\begin{center}                                                                                                      \begin{picture}(0,0)
    \put(-180,-90){\vector(1,0){150}}
    \put(-170,-100){\vector(0,1){95}}
    \put(-190,-20){$T$}
    \put(-170,-20){\line(1,0){140}}
    \put(-130,-15){$\widehat{x}$}
    \put(-130,-20){\circle*{2}}
    \put(-40,-20){\circle*{2}}
    \put(-95,-50){${\cal \varepsilon R}_x$}
    \put(-50,-70){${\cal CR}_x$}
    \put(-130,-70){${\cal CR}_x$}
    \qbezier(-90,-90)(-100,-30)(-130,-20)
    \qbezier(-80,-90)(-70,-30)(-40,-20)
    \put(-25,-102){$x$}
    \put(-130,-40){$G(t)$}
    \put(-50,-40){$H(t)$}
    \put(-180,-120){Fig 5.3. \;$\beta<\frac{a^2}{2}\frac{\gamma}{1-\gamma}+r\gamma$, $\Psi (k)> 0$.}
  \end{picture}
\begin{picture}(0,0)
    \put(50,-90){\vector(1,0){150}}
    \put(60,-100){\vector(0,1){95}}
    \put(40,-20){$T$}
    \put(60,-20){\line(1,0){140}}
    \put(100,-50){${\cal CR}_x$}
    \put(205,-102){$x$}
   \put(50,-120){Fig 5.4. \;$\beta<\frac{a^2}{2}\frac{\gamma}{1-\gamma}+r\gamma$, $\Psi (k)\leq 0$,}
    \put(50,-140){\quad\quad\quad or $\beta=\frac{a^2}{2}\frac{\gamma}{1-\gamma}+r\gamma$, $\Psi (k)< 0$.}
  \end{picture}
\end{center}
\bigskip
\vspace{5cm}

\section{Conclusions}
\setcounter{equation}{0}

 In this paper we presented a new method to study the free boundaries while the exercise region is not connected (see \eqref{h(t)}-\eqref{f(t)} and Lemma \ref{lem:inCR})
so that we can shed light on the behaviors of the free boundaries  for a fully nonlinear variational inequality without any restrictions on parameters (see Figure 5.1-5.4.). The financial meaning is that if at time $t$, investor's wealth $x$ is located in
${\cal C R}_x$, then he should continue to invest;  and if investor's wealth $x$ is located in
${\cal \varepsilon R}_x$, then he should stop to investing.



\end{document}